\definecolor{Gris}{cmyk}{0.1,0.1,0.1,.75}
\def\pn{\medskip\par\noindent}
\def\frac#1#2{{\textstyle{{#1} \overwithdelims.. {#2}}}}
\def\Frac#1#2{{\displaystyle{{#1} \overwithdelims.. {#2}}}}
\newcommand{\Pf}{\noindent{\em Proof}. }
\newcommand{\EPf}{\hbox{}\hfill$\Box$\vspace{.5cm}}
\def\H{\mathrm{H}}
\def\T{\mathrm{T}}
\def\lk{\mathrm{lk}\,}
\newtheorem{thm}{Theorem}[section]
\newtheorem*{thm*}{Theorem}
\newtheorem{defi}[thm]{Definition}
\newtheorem{prop}[thm]{Proposition}
\newtheorem{lemma}[thm]{Lemma}
\newtheorem{cor}[thm]{Corollary}
{\theoremstyle{definition}
\newtheorem{rem}[thm]{Remark}}{\theoremstyle{definition}
\newtheorem{exa}[thm]{Example}}
\newtheorem{remark}[thm]{Remark}
\newcommand{\ZZ}{{\mathbb Z}}
\newcommand{\CC}{{\mathbb C}}
\newcommand{\RR}{{\mathbb R}}
\newcommand{\LL}{{\mathcal L}}
\def\pent#1#2{\pe{\frac{#1}{#2}}}
\def\pe#1{{\left \lbrack #1 \right \rbrack}} 
\def\Mod#1{\,(\hbox{\rm mod}\,#1)}
\def\[#1\]{\begin{equation} #1 \end{equation}}
\begin{document}
\title{On  the lexicographic degree of two-bridge knots}
\date{\today}
\author{Erwan Brugall\'e}
\author{Pierre-Vincent Koseleff}
\author{Daniel Pecker}
\subjclass[2000]{14H50, 57M25, 11A55, 14P99}
\keywords{Real pseudoholomorphic curves, polynomial knots, two-bridge knots}

\begin{abstract}
We study the degree of polynomial representations of knots.
We obtain the lexicographic degree for two-bridge torus knots and generalized twist knots.
The proof uses the braid theoretical method developed by Orevkov to study real plane curves,
combined with previous results from \cite{KP4} and \cite{BKP1}.
We also give a sharp lower
bound for the lexicographic degree of
any knot, using real polynomial curves properties.
\end{abstract}
\maketitle
\section{Introduction}
It is known that every knot in $ {\bf S}^3$ can be represented as the closure of the
image of a polynomial embedding
 $\RR \to \RR ^3 \subset {\bf S}_3 $, see \cite{Sh,Va}.
In these early papers on the subject, only a few specific examples were given.
The two-bridge torus knots form the first infinite
family of knots for which a polynomial representation
was explicitly given, see \cite{RS,KP1,KP3,KP4}.

\begin{exa}[Harmonic knots (\cite{KP3, KP6})] The harmonic knot $\H(a,b,c)$ is
the polynomial knot  parametrized by
$\left(T_a(t),T_b(t),T_c(t) \right) $,
where $T_n$ is the classical
Chebyshev polynomial $T_n(\cos t)= \cos nt$ and $a,b,c$ are pairwise coprime integers.
\begin{figure}[!ht]
\begin{center}
\begin{tabular}{ccc}
{\scalebox{.8}{\includegraphics{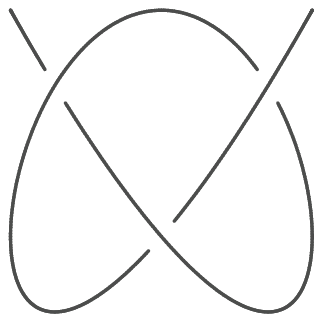}}}&&
{\scalebox{.8}{\includegraphics{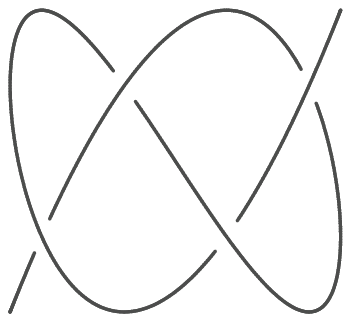}}}\\
$3_1=\H(3,4,5)$&&$4_1=\H(3,5,7)$
\end{tabular}
\end{center}
\caption{Two harmonic knots}
\label{h357}
\end{figure}
The two knot diagrams depicted in Figure \ref{h357} are the
\emph{$xy$-diagrams} of the knots $\H(3,4,5)$ and $\H(3,5,7)$,
 corresponding to the projection $(x,y,z)\mapsto(x,y)$.
\end{exa}
The \emph{multidegree} of a polynomial map $\gamma:\RR\to\RR^n, t\mapsto
(P_i(t))$ is the $n$-tuple $(\deg(P_i))$. The \emph{lexicographic
degree} of a knot $K$ is the minimal multidegree, for
the lexicographic order, of a polynomial knot whose closure in
${\bf  S}^3$ is isotopic to $K$.
The unknot has
lexicographic degree $(-\infty,-\infty,1)$, and one sees
easily that the  lexicographic degree of any other knot is
$(a,b,c)$ with $3\le a<b<c$. Given a knot, it is in general a
difficult problem to determine its lexicographic degree.
In particular, the corresponding diagram might not have the minimal number of crossings.
\medskip\par
In this paper we investigate the case of \emph{two-bridge} knots,
that are the knots for which $a=3$, see \cite{Cr,KP4,BKP1}. We focus
in particular on   two-bridge torus knots and
generalized twist knots,
that we respectively denote by $C(m)$ with $m$ odd, and
$C(m,n)$ with $mn$ positive and even (see Figure \ref{fig:2b0}). Note that $C(-m)$
and $C(-m,-n)$ are the mirror images of $C(m)$ and $C(m,n)$.
\begin{figure}[!ht]
\begin{center}
\psfrag{a}{$m$}\psfrag{b}{$n$}
\begin{tabular}{ccc}
{\scalebox{.8}{\includegraphics{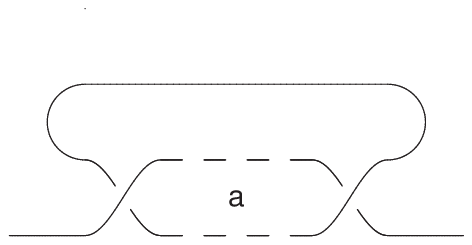}}}&&
{\scalebox{.8}{\includegraphics{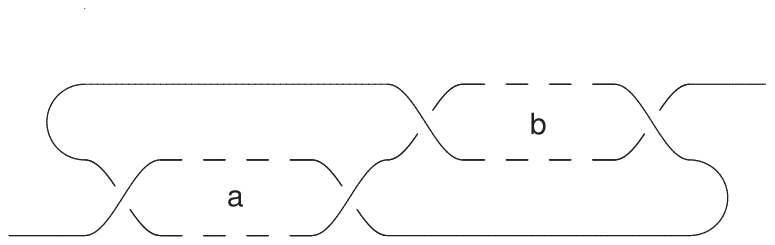}}}
\\
$C(m)$&&$C(m,n)$
\end{tabular}
\end{center}
\caption{Two-bridge torus knots and generalized twist knots}
\label{fig:2b0}
\end{figure}
\pn
Figure \ref{fig:2b1} shows  typical examples, $C(5)$ and $C(4,3).$
\begin{figure}[!ht]
\begin{center}
\begin{tabular}{ccc}
{\scalebox{.8}{\includegraphics{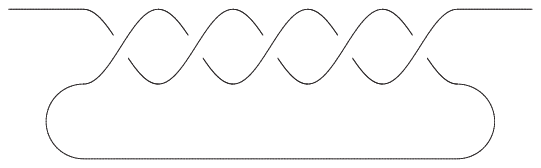}}}&&
{\scalebox{.8}{\includegraphics{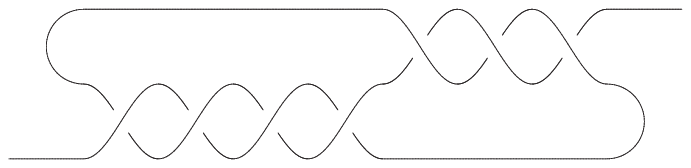}}}\\
$5_1=C(5)$&&$7_3=C(4,3)$
\end{tabular}
\end{center}
\caption{The knots $5_1$ and $7_3$}
\label{fig:2b1}
\end{figure}
\pn
Mishra claimed in \cite{Mi} that
the  lexicographic degree of the torus knot $C(m)$,
$m \not\equiv 2 \Mod 3 $, is $ (3, m+1, c)$.
Later on, it was proved in \cite{KP1} that this is false for every integer $m>5$.
At that stage, the  lexicographic degree of only a few knots was known,
namely the trefoil, the figure-eight knot, the torus knot $5_1$ \cite{Mi}, and
the torus knot $7_1$ \cite{KP1}.

For  every two-bridge knot of crossing number $N$,
a polynomial representation of degree $ (3,b,c)$,  with $b+c=3N$ and
$b \ge N+1$ is constructed in \cite{KP4}.
From that construction an upper bound for the  lexicographic
degree of two-bridge knots is immediately deduced.
The main result of this paper is that this upper bound is sharp for
two-bridge torus knots and for generalized twist knots.
\pn
{\bf Theorem \ref{thm:main}.}
{\em
Let $K$ be the two-bridge knot $C(m)$, or $C(m,n)$ with $mn>0$,
and let
$N$ be  its crossing number. Then the  lexicographic degree of $K$ is
$$ \Bigl(3, \pent{3N-1}2, \pent{3N}2+1 \Bigr).$$}
\pn
The proof  goes by the study of the plane
curve defined by the $xy$-projection of a polynomial knot, and  uses
 Orevkov's braid theoretical approach to study real plane curves (see
 \cite{O1} or Section \ref{sec:pcld}).
Like in the case of knots, we define the  lexicographic
degree of a long knot diagram as the minimal multidegree, for
the lexicographic order, of a polynomial knot whose $xy$-diagram is
isotopic in $\RR ^2$ to $D$.

The main idea in the proof of  Theorem \ref{thm:main}
is as follows. Suppose that $D_1$ is a trigonal long knot diagram
with
 lexicographic degree $(3,b,c).$
If $D_2$ is a trigonal diagram obtained from $D_1$ by an isotopy
 through trigonal diagrams
which never increases the number of crossings, then one may expect
that it
has  lexicographic degree
$(3,b',c')$ with $b'\le b.$
 If this was the case,
 then Theorem \ref{thm:main} would follow from
 the study of the alternating diagrams
 $C(m)$ or $C(m,n)$, combined with
the following theorem.
\pn
{\bf Theorem A.} (\cite{BKP1})
{\em Let $K$ be the two-bridge knot $C(m)$, or $C(m,n)$ with $mn>0$,
and let $D$ be a trigonal diagram of $K$.
Then
it is possible to transform $D$
into an alternating trigonal diagram,
so that all intermediate diagrams remain trigonal, and the number of crossings never increases.}
\pn
Unfortunately, one cannot ensure that the lexicographic degree of a
trigonal knot diagram does not increase during an isotopy, even if this isotopy
never increases the number of crossings. For this to be true, one needs
to enlarge the set of objects we are studying, namely we have to consider
\emph{real pseudoholomorphic curves} (see
Corollary \ref{cor:isotopy}). Note that a real algebraic curve
is real pseudoholomorphic, however the converse is not true in general.
\pn
We conclude the paper by providing a lower bound for the
lexicographic degree  of every non-trivial knot. We also give the
lexicographic degree of some infinite families of knots,
showing in particular that this lower bound is sharp.
\pn
{\bf Theorem \ref{thm:least}.}
{\em
Let $K$ be a knot of crossing number $N > 0.$
Then the  lexicographic degree of $K$ is at least $(3, N+1, 2N-1).$
For every integer $ N\not\equiv 2 \, \Mod 3, N \ge 3,$ there exists
a knot of crossing number $N$ and  lexicographic degree $ (3, N+1, 2N-1)$.}
\medskip

The paper is organized as follows.
We recall Orevkov's braid theoretical method in Section
\ref{sec:pcld}. To avoid technical difficulties  coming from
an excess of generality, we restrict ourselves to the case of
real
curves of  bidegree $(3,b)$, whereas
the whole section generalizes to all bidegrees.
We prove Theorem \ref{thm:main} in Section \ref{rph}.
To keep the exposition as light as possible
we keep working with real algebraic curves
as long as pseudoholomorphic curves are not needed. In particular we
prove the lower bound of Theorem \ref{thm:main}
in the particular case of the diagrams $C(m)$ and $C(m,n)$
for real algebraic curves (Proposition \ref{prop:lower bound}).
We introduce real pseudoholomorphic curves only in Section \ref{sec:general}
to deduce the general lower bound of Theorem \ref{thm:main} from
Proposition \ref{prop:lower bound}
(Proposition \ref{prop:lower}).
In Section \ref{easy}, we first obtain a sharp lower bound for alternating 
diagrams.
The proof is elementary, it is based on polynomial plane curves properties.
As a consequence, we deduce Theorem \ref{thm:least} and an upper 
bound for the crossing number of alternating knots of polynomial degree $d$.
\section{The braid of a real algebraic curve in $\CC^2$}
\label{sec:pcld}
Here we recall basic facts about the braid theoretical approach
developed by Orevkov  to study real
algebraic curves in $\CC^2$.
\subsection{Link associated to a real algebraic curve}\label{braid}
Given  $C\subset\CC^2$ a real algebraic curve,
we denote by $\RR C$ the \emph{real part} of $C$, i.e.
$\RR C= C\cap \RR^2$.
\def\im{\mathrm{Im}\,}
Let us fix an orientation preserving diffeomorphism $\Phi:\RR^4\to
\{(x,y)\in\CC^2\ | \ \im(x)>0\}$,
and let us denote by $B_r$
the image by $\Phi$ of the $4$-ball of radius $r$,
and by $S_r=\partial B_r$ the image by $\Phi$ of the $3$-sphere
of radius $r$.
If $C\subset\CC^2$ is a real
algebraic curve, then all links $S_r\cap C$ (resp. all
 surfaces $B_r\cap C$) are isotopic if $r$ is large enough.
\begin{defi}
The link $L_C=S_r\cap C$, for $r$ large enough, is called the link
associated to the real algebraic curve $C$.
\end{defi}
We denote $U_C=B_r\cap C$, and we orient $L_C$
 as the boundary of $U_C$.
We will use the following standard proposition in the proof of
Theorem \ref{thm:main}.
The linking number of two oriented links $L_1$ and $L_2$ is denoted by
$\lk(L_1,L_2)$; the algebraic intersection number of two smooth surfaces
$U_1$ and $U_2$ in $B_r$ intersecting transversally and in $B_r\setminus S_r$
is denoted by $U_1 \cdot U_2$.
\begin{prop}\label{inters}
If $L_1$ and $L_2$ are two sublinks of $L_C$ which bound two components
$U_1$ and $U_2$ of $N_C$, then
$$\lk(L_1,L_2)=U_1 \cdot U_2. $$
\end{prop}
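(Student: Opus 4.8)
The plan is to forget that $C$ is algebraic and to prove the purely topological statement: for two disjoint oriented links $L_1,L_2$ in the three-sphere $S_r=\partial B_r$, bounding oriented properly embedded surfaces $U_1,U_2$ in the four-ball $B_r$ that meet transversally in the interior, one has $U_1\cdot U_2=\lk(L_1,L_2)$. Via the diffeomorphism $\Phi$ we identify $(B_r,S_r)$ with the standard pair $(B^4,S^3)$, so that the only relevant data are the smooth oriented surfaces $U_1,U_2$ and their boundaries (recall that the components of $C$ are complex curves, hence canonically oriented, and $L_C$ is oriented as the boundary of $U_C$); the algebraicity of $C$ plays no role beyond guaranteeing that these are honest oriented surfaces in general position.

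First I would establish that $U_1\cdot U_2$ depends only on the boundary links. If $U_2'$ is another oriented surface with $\partial U_2'=L_2$, which after an isotopy rel boundary agrees with $U_2$ in a collar of $S_r$ and is transverse to $U_1$, then $U_2-U_2'$ is a $2$-cycle supported in the interior of $B_r$; since $H_2(B_r;\ZZ)=0$ it bounds a $3$-chain $W$ lying in the interior. The Stokes-type formula for intersections, $\partial(U_1\cap W)=\pm(\partial U_1\cap W)\pm(U_1\cap\partial W)$, then gives $U_1\cdot(U_2-U_2')=U_1\cdot\partial W=0$: indeed $U_1\cap W$ is a compact oriented $1$-manifold, so the signed count of its boundary vanishes, while $\partial U_1=L_1\subset S_r$ is disjoint from the interior chain $W$, so the term $\partial U_1\cap W$ vanishes as well. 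The same argument applied to $U_1$ shows independence of both choices.

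Having this, I would compute $U_1\cdot U_2$ with a convenient representative. Choose a Seifert surface $\Sigma_2\subset S_r$ of $L_2$, so that by definition $\lk(L_1,L_2)=L_1\cdot\Sigma_2$ counted in $S_r$, and push the interior of $\Sigma_2$ slightly into $B_r$ to obtain a properly embedded surface $U_2''$ with $\partial U_2''=L_2$. Every intersection point of $U_1$ with $U_2''$ then lies in a thin collar of $S_r$ and is in bijection, with matching sign, with a transverse intersection point of $L_1$ with $\Sigma_2$ in $S_r$. Hence $U_1\cdot U_2=U_1\cdot U_2''=L_1\cdot\Sigma_2=\lk(L_1,L_2)$, which is the claim.

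The main obstacle is orientation bookkeeping. One must fix once and for all the conventions (the orientation of $B_r$, the induced boundary orientation on $S_r$, and the co-orientations used both to sign intersection points and to sign linking numbers) so that every sign in the Stokes formula and in the collar identification lines up, and so that the final equality holds on the nose rather than up to a global sign. The topological content itself is standard and light; it is precisely the reconciliation of these sign conventions that requires care.
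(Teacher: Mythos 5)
The paper states Proposition \ref{inters} as a ``standard proposition'' and offers no proof at all, so there is no in-paper argument to compare against; your proof is the standard one (independence of $U_1\cdot U_2$ from the choice of bounding surfaces, via a null-homology in the $4$-ball and the Stokes-type boundary count, followed by evaluation on a pushed-in Seifert surface) and it is correct. The orientation bookkeeping you flag is routine: with the usual conventions (boundary orientation on $S_r$ induced from $B_r$, linking number defined as $L_1\cdot\Sigma_2$) the equality holds on the nose, which is what the paper needs when it later combines this with positivity of intersections of complex curves.
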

It turns out that in some situations,
one can read directly on the real part $\RR C$ an
expression of the oriented link $L_C$ as a closed braid.
For the sake of simplicity,
we  restrict ourselves here to the very special case of trigonal
rational curves.
For the general case, we refer to \cite{O1}.
In what follows,
we are interested in curves in $\CC^2$ up to isotopies respecting the fibration of $\CC^2$ by vertical lines.
\begin{defi}
An isotopy on a curve $C \subset \CC^2$ is called a 
{\em $\mathcal L$-isotopy} if it commutes with the projection
$\pi :  \CC^2 \to  \CC,  \, (x,y) \mapsto x$.
\end{defi}

Let us consider a polynomial map
$$\begin{array}{cccc}
\gamma: & \CC &\longrightarrow & \CC^2
\\ & t &\longmapsto & (P(t), Q(t))
\end{array} $$
where $P(t)$ and $Q(t)$ are two real polynomials of degrees $3$
and $b\ge 1$. Replacing $P(t)$ with $-P(t)$ if necessary, we may assume that $P(t)$ is positive for $t$
large enough. We may also assume that $b=3k-1$ or $b=3k-2$. If
$b=3k$, then we can replace the map $\gamma$ by the map
$t \to (P(t), Q(t) -\alpha P^k(t))$.
In the target space $\CC^2$,
this corresponds to performing the real change of coordinates
$y=y-\alpha x^k$ and  does not affect the topology of
the real algebraic curve $C=\gamma(\CC)$ in $\CC^2$.

We suppose in addition that the map $\gamma$ is generic enough so that:
\begin{itemize}
\item[---] $\gamma$  is an
immersion;
\item[---] if $\gamma(t_1)=\gamma(t_2)$ for $t_1\ne t_2$, then $\gamma'(t_1)\ne
\gamma'(t_2)$ and $\gamma^{-1}(\gamma(t_1))=\{t_1,t_2\}$.
\end{itemize}
In other words, the only singularities of the embedded algebraic curve
$C\subset \CC^2$ are nodes.
Since $C$ is  real, it has three kinds of nodes:
\begin{enumerate}
\item the intersection of two real branches of $C$ (i.e.
$t_1,t_2\in\RR$); such a real node is called a crossing,
see Figure \ref{ex nodes}a;
\item the intersection of two complex conjugated branches of $C$ (i.e.
$t_2=\overline{t_1}$); such a real node is called solitary node, see Figure \ref{ex nodes}b;
\item the intersection of two branches of $C$ which are
  neither real nor complex conjugated; all such nodes lie in
  $\CC^2\setminus \RR^2$ and come in pairs of complex conjugated nodes.
\end{enumerate}
\begin{figure}[!ht]
\begin{center}
\begin{tabular}{ccc}
\includegraphics[height=1.5cm, angle=0]{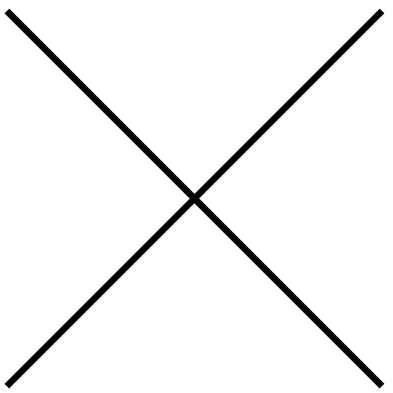}& \hspace{10ex} &
\includegraphics[height=1.5cm, angle=0]{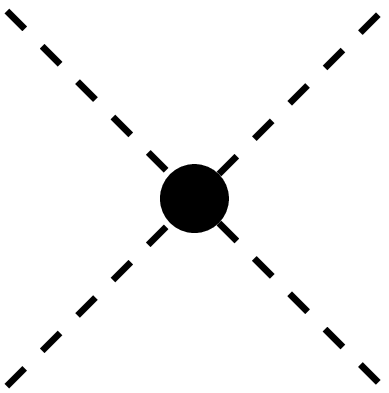}
\\ a) A crossing && b) A solitary node
\end{tabular}
\end{center}
\caption{}
\label{ex nodes}
\end{figure}
We denote by $N$ the number of crossings of $C$, by $\alpha$ the number of solitary nodes
of $C$ and by $2\beta$ its number of nodes lying in $\CC^2\setminus\RR^2$.
Hence the real part $\RR C$ is the union of $\gamma(\RR)$ together with the
$\alpha$ solitary nodes of $C$.
\begin{lemma}\label{lem:nodes}
Given a map $\gamma:\CC\to\CC^2$ as above, we have $b-1=N+\alpha+2\beta$.
\end{lemma}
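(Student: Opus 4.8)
The plan is to count the nodes of $C$ directly from the parametrization, by turning the problem into a B\'ezout intersection count in the plane of parameter pairs. Write $P(t)=p_3t^3+\cdots$ and $Q(t)=q_bt^b+\cdots$ with $p_3,q_b\neq 0$, and introduce the two symmetric polynomials
$R(t_1,t_2)=(P(t_1)-P(t_2))/(t_1-t_2)$ and $S(t_1,t_2)=(Q(t_1)-Q(t_2))/(t_1-t_2)$,
of respective degrees $2$ and $b-1$. A node of $C$ is exactly an unordered pair $\{t_1,t_2\}$ with $t_1\neq t_2$ and $\gamma(t_1)=\gamma(t_2)$, that is, an off-diagonal common zero of $R$ and $S$; by the genericity assumption $\gamma^{-1}(\gamma(t_1))=\{t_1,t_2\}$ each node is a genuine double point, and by symmetry it gives rise to exactly the two ordered solutions $(t_1,t_2)$ and $(t_2,t_1)$. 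Thus the total number $N+\alpha+2\beta$ of nodes — crossings, solitary nodes, and non-real nodes being simply the three ways the pair $\{t_1,t_2\}$ can sit with respect to complex conjugation — equals half the number of off-diagonal common zeros of $R$ and $S$.

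Next I would apply B\'ezout's theorem in $\mathbb{P}^2$ to the two curves $\{R=0\}$ and $\{S=0\}$: their total intersection number is $2(b-1)$. I then argue that every intersection point is finite and off the diagonal. They share no common component, since otherwise $C$ would carry infinitely many double points, contradicting that its only singularities are nodes. On the diagonal $t_1=t_2=t$ one computes $R(t,t)=P'(t)$ and $S(t,t)=Q'(t)$, so a common zero there would violate the immersion hypothesis $\gamma'(t)=(P'(t),Q'(t))\neq 0$; hence there is none. It remains to rule out common points on the line at infinity.

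This last point is the crux, and it is exactly where the standing hypothesis $b\not\equiv 0\Mod 3$ (namely $b=3k-1$ or $b=3k-2$) enters. The leading forms are $p_3(t_1^2+t_1t_2+t_2^2)$ for $R$ and $q_b(t_1^{b-1}+t_1^{b-2}t_2+\cdots+t_2^{b-1})$ for $S$. Writing $z=t_1/t_2$, the points at infinity of $\{R=0\}$ satisfy $z^2+z+1=0$, i.e.\ $z$ is a primitive cube root of unity $j$, while those of $\{S=0\}$ satisfy $z^b=1$ with $z\neq 1$. A common point at infinity would force $j^b=1$, that is $3\mid b$, which is excluded. Therefore the two leading forms have no common root, the intersection multiplicity at infinity is $0$, and all $2(b-1)$ intersection points are finite and off the diagonal. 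Combined with the first paragraph this yields $2(b-1)=2(N+\alpha+2\beta)$, hence $b-1=N+\alpha+2\beta$.

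I expect the only delicate point to be the multiplicity bookkeeping: to read the B\'ezout number as exactly twice the number of nodes I must know that each off-diagonal common zero of $R$ and $S$ is transverse, of intersection multiplicity one. This is precisely what the nodal genericity of $\gamma$ guarantees — two distinct branches with distinct tangents at each double point, and no third parameter mapping to the same point — so that no intersection mass is hidden in a tangency or a higher-order contact. Once this transversality is in hand the count is exact and the lemma follows.
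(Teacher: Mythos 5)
Your proof is correct, but it takes a genuinely different route from the paper's. The paper disposes of the lemma in one line by invoking the genus formula for nodal curves: a rational nodal curve whose Newton polygon is the triangle with vertices $(0,0)$, $(0,3)$, $(b,0)$ has exactly as many nodes as there are interior lattice points of that triangle, which is $b-1$ precisely when $\gcd(3,b)=1$. You instead run an elementary B\'ezout count on the two resolvent curves $R=(P(t_1)-P(t_2))/(t_1-t_2)$ and $S=(Q(t_1)-Q(t_2))/(t_1-t_2)$ of degrees $2$ and $b-1$ in the parameter plane, checking that all $2(b-1)$ intersections are finite, off-diagonal, and transverse. Both arguments hinge on the standing hypothesis $b\not\equiv 0\Mod 3$ --- in the paper's version it is what makes the lattice-point count come out to $b-1$ rather than $b-2$, and in yours it is what keeps the cube root of unity $j$ off the solution set of $z^b=1$ at infinity --- but your write-up makes this dependence explicit, which is a genuine gain in transparency. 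Your approach is more self-contained (it needs only B\'ezout and the local transversality computation, whose Jacobian $\bigl(P'(t_2)Q'(t_1)-P'(t_1)Q'(t_2)\bigr)/(t_1-t_2)^2$ is indeed nonzero exactly because the two branches of a node have independent tangents), whereas the paper's is shorter and, via the adjunction formula, is the version that transfers to the pseudoholomorphic setting needed later in Section 3.2; your B\'ezout argument would not carry over there, though that is outside the scope of the lemma as stated.
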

\begin{proof}
The Newton polygon $T$ of the curve $\gamma(C)$ is the triangle with
vertices $(0,0),(0,3)$ and $(b,0)$, and the number of nodes of
$\gamma(C)$ is the number of points in $\ZZ^2$ contained in the interior
of $T$.
\end{proof}
\begin{exa}\label{ex:running ex}
Let $1/\sqrt 3 < \nu < 1$.
Using elementary elimination theory, one sees  that
the real part of the parametrized curve
$ \gamma: t \mapsto (T_3(t), T_4(t+\nu)) $
is depicted,  up to $\mathcal L$-isotopy, in Figure \ref{Ex1}a. It has two
crossings and one solitary node.
\def\imagetop#1{\vtop{\null\hbox{#1}}}
\begin{figure}[h]
\begin{center}
\begin{tabular}{ccc}
\imagetop{\scalebox{.25}{\includegraphics{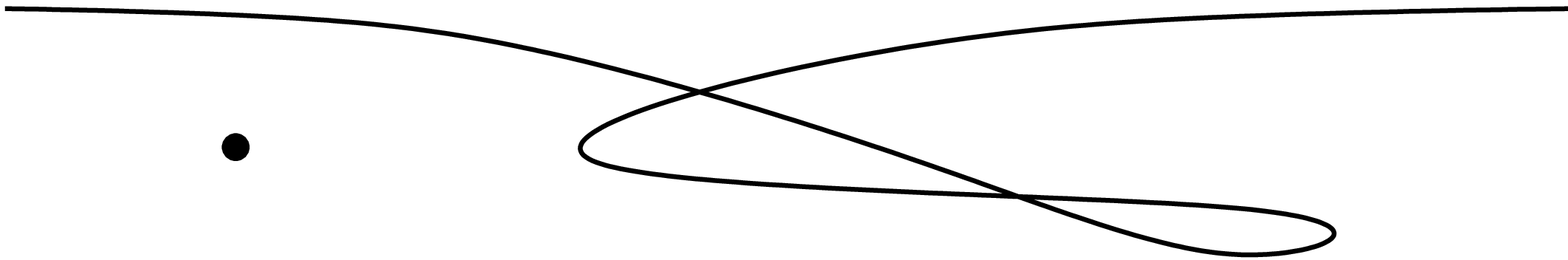}}}&&
\imagetop{\scalebox{.25}{\includegraphics{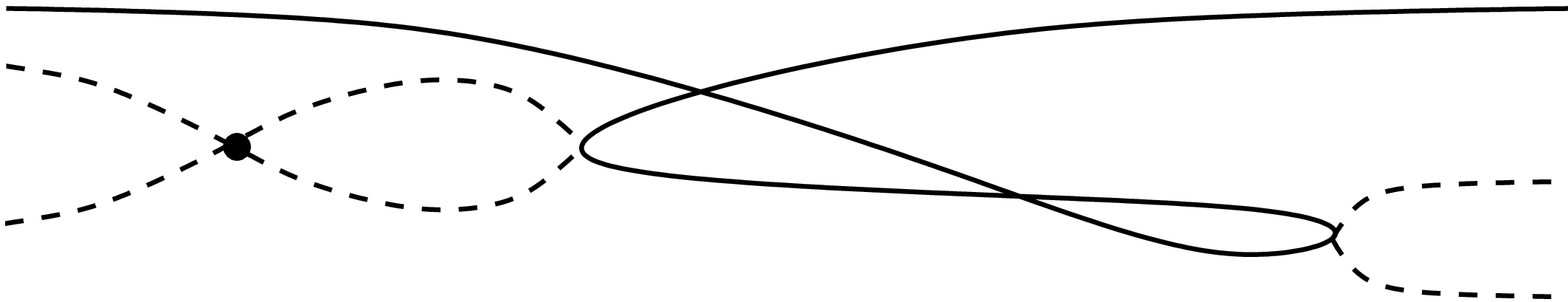}}}\\
\\ a) $\RR C$&&b) $A$
\end{tabular}
\end{center}
\caption{}
\label{Ex1}
\end{figure}
\end{exa}
Now let us consider the set $A=\pi^{-1}(\mathbb R)\cap C$. It
consists in the real part $\RR C$ together with
some pairs of complex conjugated arcs, meeting at critical points
of the map $\pi_{|\RR C}$ (i.e. at points of $\RR C$
which intersect  a vertical line non-transversely).

\begin{exa}
The set $A$ in the
case of the parametrization of Example \ref{ex:running ex} is depicted
in Figure \ref{Ex1}b. The dotted curves correspond to complex
conjugate arcs.
\end{exa}

The set $A$ is not a manifold because of critical points of the map
$\pi_{|\RR C}$. However there is a canonical way to perturb
$A$ to a $1$-dimensional manifold and to recover the link $L_C$.
Let $R_1, R_2$ and $\varepsilon$ be some positive real numbers, and
let
$$V_1=\pi^{-1}\left(\{x\in \CC\ | \ |x|\le R_1 \mbox{ and }
\im(x)>\varepsilon\}\right)\ \mbox{ and } \
V_2=\{y \in\CC \ | \ |y|\le
R_2\}.$$
 Then for $R_1$ and $R_2$ large enough, and
$\varepsilon$ small enough, the link
$C\cap \partial \left(V_1\times V_2\right)$ in
$\partial \left(V_1\times V_2\right)\simeq S^3$
 is precisely the link $L_C$, and the surface
$C\cap \left(V_1\times V_2\right)$ is precisely the surface $U_C$ in
$ V_1\times V_2\simeq B^4$. Moreover the link $C\cap \partial
 \left(V_1\times V_2\right)$ appears naturally as the
 closure of a braid $b_C$, that we describe in the next section.

\subsection{The link $L_C$ as a closed braid}\label{sec:closed braid}
Recall that the {\em group of braids with $3$-strings} is defined as
$$B_3=\langle \sigma_1, \sigma_{2}|
\sigma_1\sigma_{2}\sigma_1=\sigma_{2}\sigma_{1}\sigma_{2}\rangle.$$
This terminology comes from the fact that there exists a natural
geometric interpretation of $\sigma_1$, $\sigma_2$, and of the
relation $\sigma_1\sigma_{2}\sigma_1=\sigma_{2}\sigma_{1}\sigma_{2}$
as depicted in Figure \ref{fig:braid}. Note that a string of a
braid is
implicitly oriented from left to right. In particular if an oriented link $L$
is represented as the closure of a braid $b$, then the linking number of two
sublinks $L_1$ and $L_2$ of $L$ is  half the sum of all exponents of
$b$ corresponding to the crossings of $L_1$ and $L_2$.

\begin{figure}[h]
\begin{center}
\begin{tabular}{ccccc}
\includegraphics[height=1cm, angle=0]{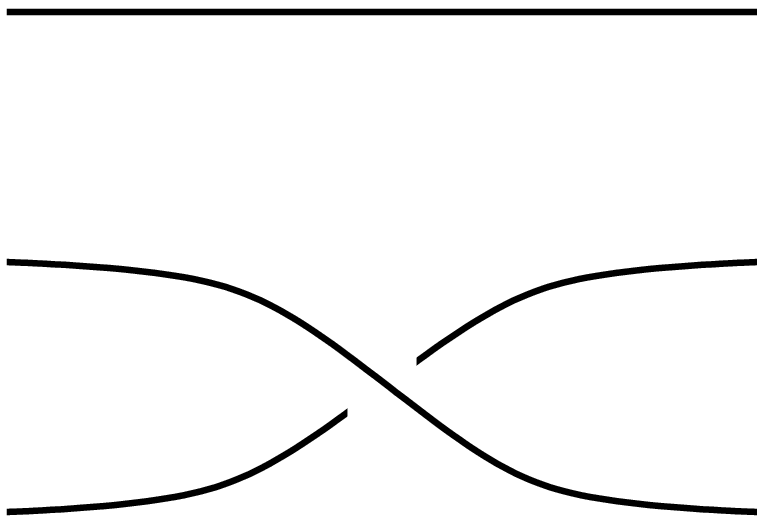}& \vspace{3ex} &
\includegraphics[height=1cm, angle=0]{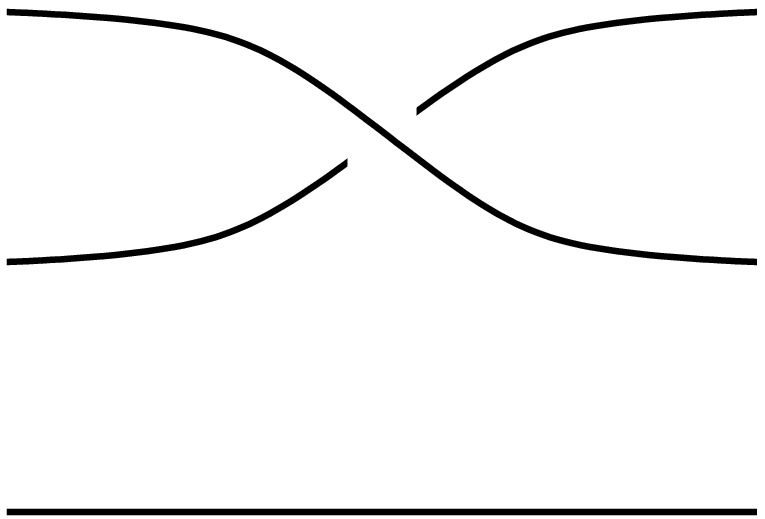}& \vspace{3ex} &
\includegraphics[height=1cm, angle=0]{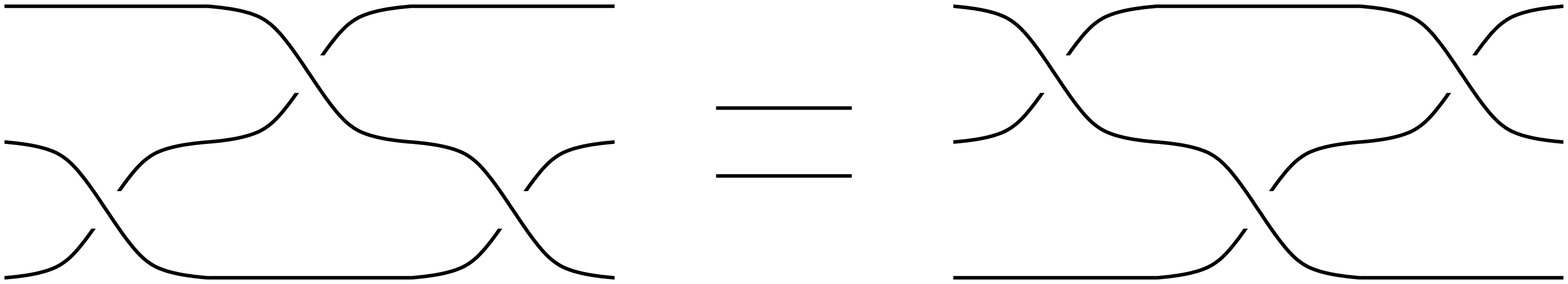}
\\ a)  $\sigma_1$ && b) $\sigma_2$ && c) $\sigma_1\sigma_{2}\sigma_1=\sigma_{2}\sigma_{1}\sigma_{2}$
\end{tabular}
\end{center}
\caption{}
\label{fig:braid}
\end{figure}

Let us  associate an \textit{$\mathcal L$-scheme}
to the real curve $C$.
Denote by $ l_1< l_2<\ldots< l_s$ the real vertical lines of
$\CC^2$ which are not  transversal to
$C$ (the order on the $l_i$'s is the one induced by the canonical orientation of
$\RR$). Recall that the degree of the polynomial $Q(t)$ is $b=3k-1$ or $b=3k-2$.
\begin{defi}
The $\mathcal L$-scheme $\LL_C$ realized by $C$ is given by the sequence
$c_1c_2\ldots c_{s+1}$ where
$c_i$  with $1\le i\le s$ is given by
\begin{itemize}
\item[---]  $c_i=\supset_j$ (resp. $c_i=\subset_j$)
if $1\le i\le s$ and
$l_i$ has an ordinary tangency point with $C$ which is a local
maximum (resp. minimum) of the map $\pi|_{\RR C}$;
\item[---] $c_i=\times_{j}$ (resp. $c_i=\bullet_{j}$) if  $i\le s$ and
$l_i$ passes through a
 crossing (resp. solitary node) of $\RR C$;
\end{itemize}
(in each case $j=1$ if the transverse intersection point of
$\RR l_i$ and $\RR C$ is below the second point of $\RR l_i\cap\RR C$,
and $j=2$ otherwise)

and $c_{s+1}$ is given by
\begin{itemize}
\item[---]  $c_{s+1}=\downarrow$ (resp. $\uparrow$)
if  $b=3k-1$, and $Q(t)$ is positive (resp. negative) for $t$ large
enough;
\item[---] $c_{s+1}=\vee$ (resp. $\land$)
if  $b=3k-2$, and $Q(t)$ is positive (resp. negative) for $t$ large
enough.
\end{itemize}
\end{defi}

\begin{exa}
The $\mathcal L$-scheme realized by the parametrization of Example
\ref{ex:running ex} is
$$\bullet_1 \subset_1\times_2\times_1\supset_1 \vee.$$
\end{exa}
\vspace{1ex}
Then replace the $\mathcal L$-scheme $c_1c_2\ldots c_{s+1}$ by
$c'_0c_1c_2\ldots c'_{s+1}$ where
\begin{itemize}
\item[---] $c'_0= \supset_2$ (resp. $\supset_1$) and $c'_{s+1}= \subset_1$
if  $c_{s+1}=\downarrow $ and
$k$ is even (resp. odd);

\item [---] $c'_0= \supset_1$ (resp. $\supset_2$) and $c'_{s+1}= \subset_2$
if   $c_{s+1}=\uparrow $ and
$k$ is even (resp. odd);

\item[---]  $c'_0= \supset_2$ (resp. $\supset_1$) and
$c'_{s+1}=\subset_{1}\supset_{1} \subset_1$
if  $c_{s+1}=\vee $  and
$k$ is even (resp. odd);

\item[---]  $c'_0= \supset_1$ (resp. $\supset_2$) and
$c'_{s+1}=\subset_{2}\supset_{2} \subset_2$
if   $c_{s+1}=\land$  and
$k$ is even (resp. odd).
\end{itemize}
Finally replace each $\bullet_j$ with $\subset_{j}\supset_{j}$ and
each $\times_j$ with $\supset_{j}\subset_{j}$ in $c'_0c_1c_2\ldots
c'_{s+1}$, and do the following final substitutions:
\begin{itemize}
\item[---] replace each $\supset_j\subset_j$ by $\sigma_j^{-1}$ (see Figure
  \ref{fig:curve to braid}a);
\item[---] replace each $\supset_1\subset_2$ by $\sigma_1^{-1}\sigma_2^{-1}\sigma_1$ (see Figure
  \ref{fig:curve to braid}b);
\item[---] replace each $\supset_2\subset_1$ by $\sigma_2^{-1}\sigma_1^{-1}\sigma_2$ (see Figure
  \ref{fig:curve to braid}c).
\end{itemize}
\begin{figure}[h]
\begin{center}
\begin{tabular}{ccccc}
\includegraphics[height=1cm, angle=0]{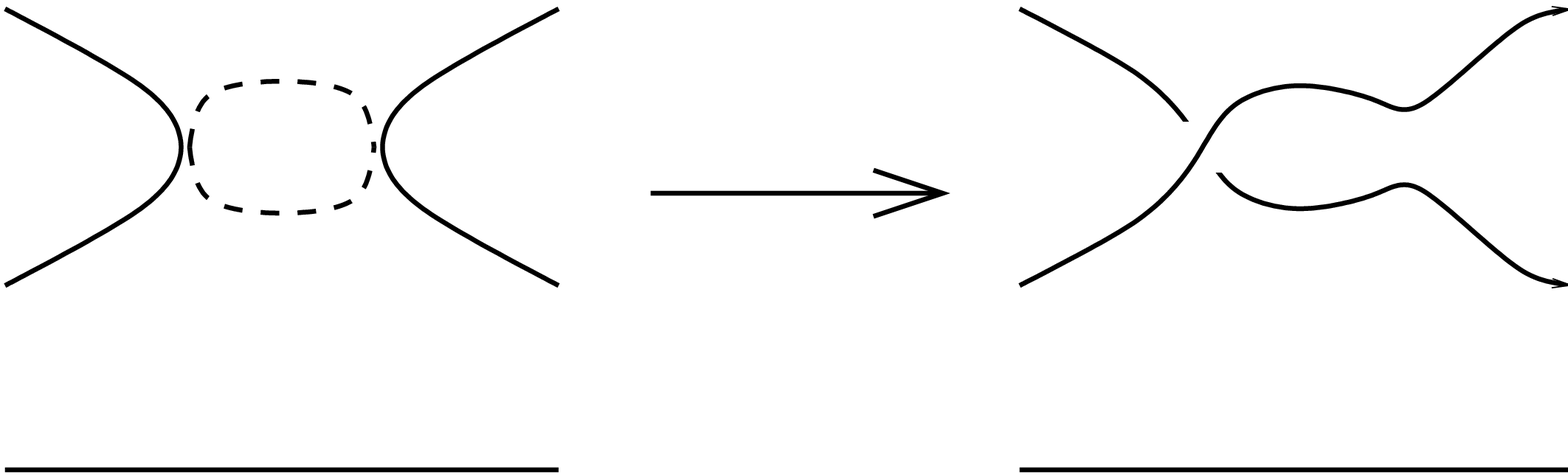}& \vspace{3ex} &
\includegraphics[height=1cm, angle=0]{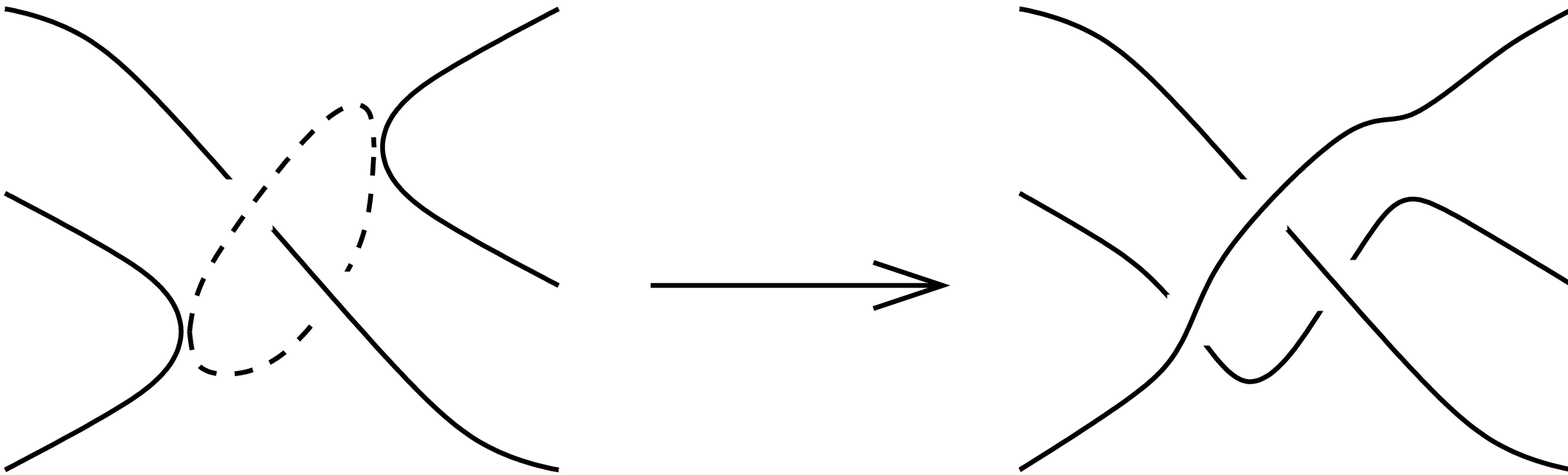}& \vspace{3ex} &
\includegraphics[height=1cm, angle=0]{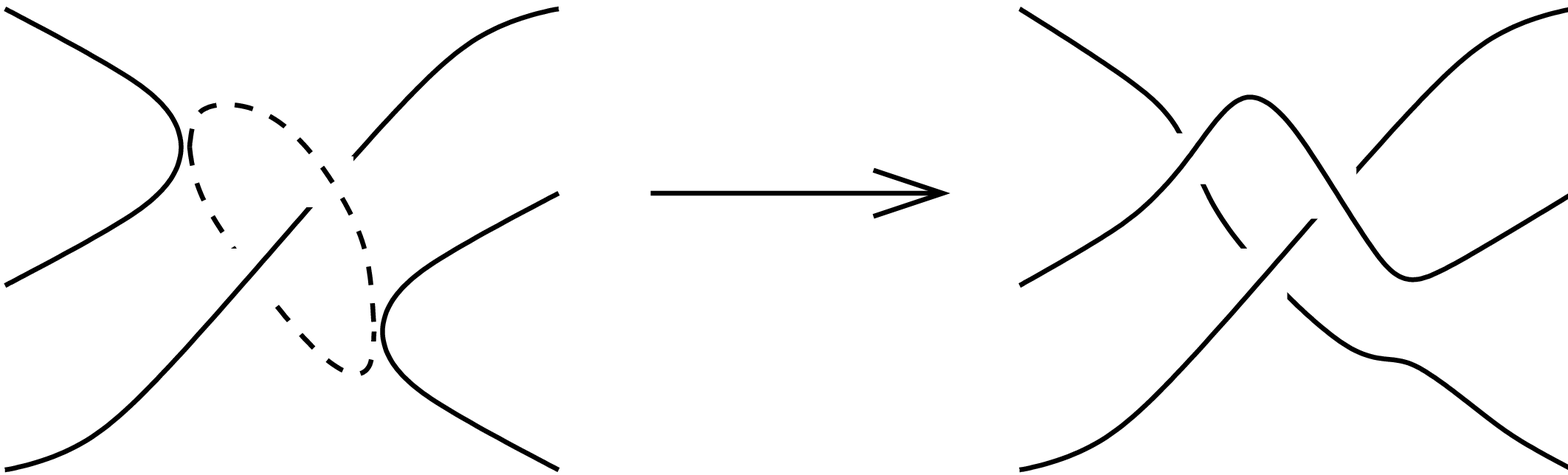}
\\ a) $\supset_j\subset_j \to \sigma_j^{-1}$  && b)
$\supset_1\subset_2 \to \sigma_1^{-1}\sigma_2^{-1}\sigma_1$&& c)
$\supset_2\subset_1 \to \sigma_2^{-1}\sigma_1^{-1}\sigma_2$
\end{tabular}
\end{center}
\caption{}
\label{fig:curve to braid}
\end{figure}
Then we obtain a braid $b_\mathbb R$. We define the braid associated
 to the $\mathcal L$-scheme $\LL_C$, denoted $b_C$,
as the braid
$b_\mathbb R (\sigma_1\sigma_2\sigma_1)^k$.
\begin{exa}
After the two replacements described above, we obtain in the case of
Example
\ref{ex:running ex}
\begin{center}
\begin{tabular}{cccccccc}
&$\bullet_1$ &$\subset_1$ & $\times_2$ & $\times_1$ & $\supset_1$ & $ \vee$ &\\
$\overbrace{\supset_2}$ & $\overbrace{\subset_1\supset_1}$ & $\overbrace{\subset_1}$ & $
\overbrace{\supset_2\subset_2}$ & $\overbrace{\supset_1\subset_1}$ & $\overbrace{\supset_1}$&
$ \overbrace{\subset_1\supset_1\subset_1}$&\\
\multicolumn{3}{c}{$\sigma_2^{-1} \sigma_1^{-1} \sigma_2 \sigma_1^{-1}$}&$\sigma_2^{-1}$&$\sigma_1^{-1}$&
\multicolumn{2}{c}{$\sigma_1^{-2}$} & $(\sigma_1\sigma_2\sigma_1)^2$
\end{tabular}
\end{center}
which finally gives the braid
$$
b_C=\sigma_2^{-1}\sigma_1^{-1}\sigma_2\sigma_1^{-1}
\sigma_2^{-1}\sigma_1^{-3}(\sigma_1\sigma_2\sigma_1)^2$$
which is the trivial braid, see Figure \ref{Ex2}.
\begin{figure}[h]
\begin{center}
\begin{tabular}{c}
\includegraphics[height=1cm, angle=0]{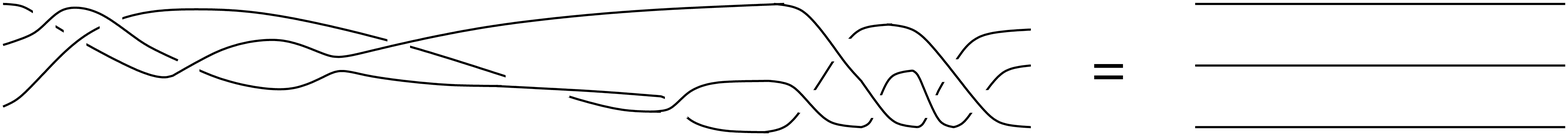}
\\
\\ The braid $b_C$ of the parametrization given in  Example \ref{ex:running ex}
\end{tabular}
\end{center}
\caption{}
\label{Ex2}
\end{figure}
\end{exa}
\begin{prop}
The closure of the braid $b_C$ is the oriented link $L_C$ associated to
$C$.
\end{prop}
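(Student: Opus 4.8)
The plan is to read the link $L_C=C\cap\partial(V_1\times V_2)$ as the closure of a braid traced by the three points of $C$ lying in the fibres of $\pi$, as the base point $x$ describes the boundary of the topological disk $D=\{x\in\CC \mid |x|\le R_1,\ \im(x)\ge\eps\}$. Since $\deg P=3$, the restriction $\pi_{|C}$ is a $3$-fold branched covering of $\CC$, and for $R_1,R_2$ large and $\eps$ small the interior of $D$ contains no branch point (the critical points of $\pi_{|\RR C}$ sit on the real axis, hence just below the segment $\im(x)=\eps$). Thus $C\cap\pi^{-1}(D)$ is a braided surface on three strands inside $D\times V_2$, and $L_C$ is exactly the closure of the boundary braid obtained as $x$ runs once around the circle $\partial D$. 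This circle splits into the horizontal segment $\{\im(x)=\eps\}$ and the large upper semicircle $\{|x|=R_1\}$; the first piece will account for the interior letters $c_1\cdots c_s$ of $\LL_C$, and the second for the prefix $c'_0$, the suffix $c'_{s+1}$ and the factor $(\sigma_1\sigma_2\sigma_1)^k$.

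First I would treat the horizontal segment. For $\eps$ small the three points over $x=u+i\eps$ form a small perturbation of the fibre of $\pi_{|\RR C}$ over $u\in\RR$, so nothing happens away from the special lines $l_1<\cdots<l_s$. At each $l_i$ I would write down the local model and check that perturbing $x$ to height $\eps$ resolves the corresponding feature as claimed: an ordinary tangency contributes a cap $\supset_j$ or a cup $\subset_j$ according to whether it is a local maximum or minimum of $\pi_{|\RR C}$; a crossing, being a transverse real node, splits into the two arcs $\supset_j\subset_j$; and a solitary node, being the transverse intersection of two complex conjugate arcs of $A$, splits into $\subset_j\supset_j$. Assembling these local pieces and converting each adjacent pair via the smoothing rules of Figure \ref{fig:curve to braid} reproduces precisely the word $b_\RR$; the fact that every generator appears with a negative exponent, together with the index $j\in\{1,2\}$, follows from the orientation of $L_C$ as $\partial U_C$ and from the convention that strands run from left to right.

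Finally I would analyse the large semicircle, which encodes the behaviour of $C$ at infinity recorded by $c_{s+1}$. Writing $P(t)=at^3+\cdots$ with $a>0$ and $Q(t)=ct^b+\cdots$ with $b=3k-1$ or $3k-2$, the three solutions of $P(t)=x$ behave like the cube roots $t_j\sim(x/a)^{1/3}\zeta^{\,j}$ with $\zeta=e^{2i\pi/3}$, so that $y_j=Q(t_j)\sim c\,(x/a)^{b/3}\zeta^{\,jb}$. As $\arg(x)$ increases from $0$ to $\pi$ along the arc, each $y_j$ turns by approximately $k\pi$, so the three strands undergo $k$ collective half-twists, yielding the factor $(\sigma_1\sigma_2\sigma_1)^k$; meanwhile the residue of $b$ modulo $3$ and the sign of $c$ (equivalently, whether $c_{s+1}$ equals $\downarrow,\uparrow,\vee$ or $\land$) dictate how the outermost strands are capped at the two corners of $\partial D$, which fixes $c'_0$ and $c'_{s+1}$ in each of the four cases. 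This last step is the main obstacle: one must track the arguments of the $y_j$ precisely enough to recover both the exact power $k$ and the correct corner caps, and to match the outcome with the parity conditions of the definition of $b_C$. The remaining verifications amount to a direct comparison of these local models with Figures \ref{fig:braid} and \ref{fig:curve to braid}, along the lines of the general construction of \cite{O1}.
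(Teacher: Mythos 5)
Your proof follows essentially the same route as the paper's: the paper likewise reads $L_C$ as the closed braid traced over the boundary of the half-disk, deferring the local models over the real segment (tangencies, crossings, solitary nodes) to \cite[Section~3]{O1} and determining the contribution of the large arc from the dominating monomials $y^3\pm\lambda x^{3k-1}$ or $y^3\pm\lambda x^{3k-2}$ at infinity, which is exactly the asymptotic computation you carry out on the parametrization. (Your parenthetical claim that the interior of $D$ contains no branch point of $\pi_{|C}$ can fail when $P'$ has complex conjugate roots, but it is harmless since the braid is read only along $\partial D$.)
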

\begin{proof}
The only part which is not already contained in
{\cite[Section 3]{O1}} is the determination of the braid
$C\cap \pi^{-1}(\gamma)$ where $\gamma$ is the arc in $\CC$
parametrized by $u\in [0;\pi]\to \varepsilon + R_1 \exp(2i\pi u)$
(recall that $\varepsilon$ and $R_1$ have been introduced at the end
of Section \ref{braid}).
But this braid is dictated by the  monomials
of an equation of
$C$  in $\CC^2$ which are dominating at infinity.
 Up to a real multiplicative constant, these monomials
 are precisely
\begin{itemize}
\item[---] $y^3 - \lambda x^{3k-1}$ if $c_{s+1}=\downarrow$,
\item[---] $y^3 + \lambda x^{3k-1}$ if $c_{s+1}=\uparrow$,
\item[---] $y^3 - \lambda x^{3k-2}$ if $c_{s+1}=\vee$,
\item[---] $y^3 + \lambda x^{3k-2}$ if $c_{s+1}=\land$,
\end{itemize}
where $\lambda$ is some positive real number.
Now the result follows from straightforward local computations as in
{\cite[Section~3]{O1}}.
\end{proof}

\section{Proof of Theorem \ref{thm:main}}\label{rph}
Here we apply the method  described in the previous section to
the study of the lexicographic degree of two-bridge knots.
Given a long knot diagram $D$ in $\RR^2$, we denote by $\overline D$
its projection to $\RR^2$ (i.e. we forget about the sign of the crossings).

\subsection{The case of alternating diagrams}\label{sec:special}
We first consider the special case of alternating diagrams, from which
we will deduce the general case in Section \ref{sec:general}.
\begin{prop}\label{prop:lower bound}
Let $D$ be the alternating trigonal diagram
$C(m) $ with $m$ odd, or $C(m,n)$ with  $mn$ even and positive.
If $\gamma:\RR\to\RR^2$ is a polynomial map of bidegree $(3,b)$ such
that $\gamma(\RR)$ is $\LL$-isotopic to $\overline D$,
then $b\geq \pent{3N-1}2$, where $N$ is
the crossing number of the knot.
\end{prop}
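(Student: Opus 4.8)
The goal is a lower bound $b \ge \lfloor (3N-1)/2 \rfloor$ for the $y$-degree of any bidegree-$(3,b)$ polynomial parametrization whose real part is $\LL$-isotopic to the projection $\overline D$ of an alternating trigonal diagram $C(m)$ or $C(m,n)$. The strategy is to pass from the real planar picture to the associated link $L_C$ via the closed-braid description of Section \ref{sec:closed braid}, and then to extract a numerical inequality from the combinatorics of that braid together with Lemma \ref{lem:nodes}. The key accounting identity is $b-1 = N + \alpha + 2\beta$, where $N$ is the number of real crossings, $\alpha$ the number of solitary nodes, and $2\beta$ the number of nodes off $\RR^2$. Since $N$, $\alpha$, $\beta \ge 0$, the bound $b \ge \lfloor (3N-1)/2 \rfloor$ cannot follow from this identity alone: it must instead follow from a lower bound on the \emph{total} node count $N + \alpha + 2\beta$ forced by the topology of the diagram $\overline D$.

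\textbf{Extracting the constraint.} First I would fix the alternating diagram $\overline D$ and count its crossings; for $C(m)$ and $C(m,n)$ the number of crossings of the reduced alternating diagram equals the crossing number $N$ of the knot, since alternating diagrams are minimal. The heart of the argument is to show that the number of real crossings of \emph{any} $\LL$-isotopic parametrized curve is at least roughly $(N-1)/2$ worth of additional nodes beyond the visible crossings, or equivalently that $b-1 \ge \lceil (3N-3)/2 \rceil$. The natural mechanism is Proposition \ref{inters}: the linking numbers between the sublinks of $L_C$ are determined on one hand by the signed braid word $b_C$ (half the sum of the relevant exponents) and on the other hand by the intersection numbers $U_i \cdot U_j$ of the spanning surfaces. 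Because $L_C$ for a trigonal curve is a closed $3$-braid and $\gamma(\RR)$ realizes the knot diagram, I would compute the writhe or the self-linking data of the diagram $C(m)$, $C(m,n)$ and compare it to what a bidegree-$(3,b)$ braid $b_\RR (\sigma_1\sigma_2\sigma_1)^k$ can produce. The exponent sum of $(\sigma_1\sigma_2\sigma_1)^k$ contributes $+3k$, while the negative generators coming from the substitutions $\supset_j\subset_j \mapsto \sigma_j^{-1}$ contribute $-2(N+\alpha)$ worth of crossings plus boundary corrections. Setting the algebraic total equal to the writhe imposed by the alternating diagram should force $b = 3k - 1$ or $3k - 2$ to be at least $\lfloor (3N-1)/2\rfloor$.

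\textbf{Carrying it out.} Concretely, I would (i) write down the $\LL$-scheme of the standard alternating diagram $C(m)$ or $C(m,n)$ and compute $b_C$; (ii) observe that the closure must be the correct knot, which fixes its self-linking/writhe invariant; (iii) use that the only degrees of freedom in reparametrizing $\overline D$ while keeping $\LL$-isotopy type are the numbers $\alpha$, $\beta$ of invisible nodes, constrained by $b-1 = N+\alpha+2\beta$; and (iv) show that realizing the full crossing pattern of an alternating diagram by a genuine real curve of $x$-degree $3$ forces enough tangencies (maxima/minima $\supset$, $\subset$ in the $\LL$-scheme) that the resulting braid cannot close up to the right knot unless $b$ is large. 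The cleanest route is probably to bound the number of local extrema of $\pi|_{\RR C}$: a bidegree-$(3,b)$ curve has at most $b-1$ critical points of $\pi|_{\RR C}$ (the roots of $P'Q' $-type resultant, of degree $\le 2 + (b-1)$), and the alternating braid $C(m)$, $C(m,n)$ requires at least a prescribed number of such extrema to weave $N$ crossings in a $3$-braid.

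\textbf{Main obstacle.} The hard part will be step (iv): converting the purely combinatorial requirement ``the closed $3$-braid $b_C$ equals the alternating knot $C(m)$ or $C(m,n)$'' into a sharp count forcing $b \ge \lfloor (3N-1)/2\rfloor$. This is where the rigidity of \emph{real algebraic} curves — as opposed to arbitrary braids — must be exploited, since a free $3$-braid could realize the knot with far fewer crossings; the degree-$3$ projection constraint limits how the strands can be permuted along $\RR$, and quantifying this via the extremal structure of $\LL_C$ (the alternation of $\supset$ and $\subset$ between consecutive crossings) is the delicate combinatorial core. I expect the final inequality to emerge from carefully matching the number of sign changes in the braid word against the parity constraint $b \equiv 1$ or $2 \pmod 3$, which produces the floor $\lfloor(3N-1)/2\rfloor$ rather than a cleaner linear bound.
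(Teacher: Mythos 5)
Your framework is the right one (the braid $b_C$ of the complexified curve, Lemma \ref{lem:nodes}, and Proposition \ref{inters}), and you correctly see that the bound must come from forcing $\alpha+2\beta$ to be large. But the mechanism you propose for step (iv) does not work, and that is precisely the step you leave open. Two concrete problems. First, the closure of $b_C$ is not the knot $K$: since the two ramification points of $\pi\circ\gamma$ are real (Riemann--Hurwitz for a degree-$3$ rational map, totally ramified at infinity, leaves exactly two finite critical points, both visible as the $\subset$ and $\supset$ of the $\LL$-scheme), the surface $U_C$ consists of three disjoint embedded disks, so $L_C$ is a $3$-component link of unknots. Hence there is no ``writhe imposed by the alternating diagram'' to match against the exponent sum of $b_C$, and a total exponent-sum count mixes self-crossings of each component with inter-component crossings and yields nothing sharp. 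Second, your fallback of bounding the number of local extrema of $\pi|_{\RR C}$ rests on a miscount: that number is exactly $2$, independently of $b$, so it carries no information about $b$.

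What the paper actually does --- and what is missing from your proposal --- is to single out one linking number, $\lk(L_1,L_3)$, between the two ``outer'' disks. On one side, $\lk(L_1,L_3)=U_1\cdot U_3\le\beta$, because the $U_i$ meet only at the $\beta$ pairs of conjugate nodes and all intersections are positive. On the other side, reading $2\lk(L_1,L_3)$ off the braid word: the real crossings of the pattern $\subset_2(\times_1)^{m}(\times_2)^{n}\supset_1$ contribute $0$ (each $\sigma_j^{-1}$ links only adjacent strands and, with the labelling of Figure \ref{label link}, never the pair $(1,3)$), each solitary node costs at least $-1$, and the twist $(\sigma_1\sigma_2\sigma_1)^k$ at infinity contributes $+k$. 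This gives $2\beta\ge 2\lk(L_1,L_3)\ge k-\alpha-O(1)$, which combined with $N+\alpha+2\beta=b-1$ and $b=3k-1$ or $3k-2$ yields $b\ge\pent{3N-1}2$. Without isolating this particular pairing --- real crossings invisible to $\lk(L_1,L_3)$, the twist at infinity forcing it up, and $\beta$ capping it --- the inequality does not emerge from writhe or parity considerations, so as it stands the proposal has a genuine gap at its core.
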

\begin{proof}
Up to a change of coordinates in $\CC^2$, we may suppose that
$b=3k-1$ or  $b=3k-2$.
By genericity argument, we may also suppose that $C=\gamma(\CC)$ is a
nodal curve in $\CC^2$. Finally by symmetry we may further
assume that $m$ is
even
when
$m+n$ is odd.

In the case of $C(m)$,  the $\LL$-scheme of $C$ contains the
patterns $\subset_2(\times_1)^{m}\supset_2 $. In the case of
$C(m,n)$, the $\LL$-scheme of $C$ contains the
patterns $\subset_2(\times_1)^{m}(\times_2)^{n}\supset_1 $ (see Figure
\ref{label link}).
\begin{figure}[!th]
\begin{center}
\psfrag{x}{$\scriptstyle 1$}
\psfrag{y}{$\scriptstyle 2$}
\psfrag{z}{$\scriptstyle 3$}
\psfrag{a}{$m$}\psfrag{b}{$n$}
\begin{tabular}{ccc}
{\scalebox{.6}{\includegraphics{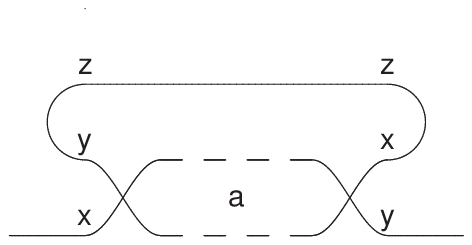}}}&
{\scalebox{.6}{\includegraphics{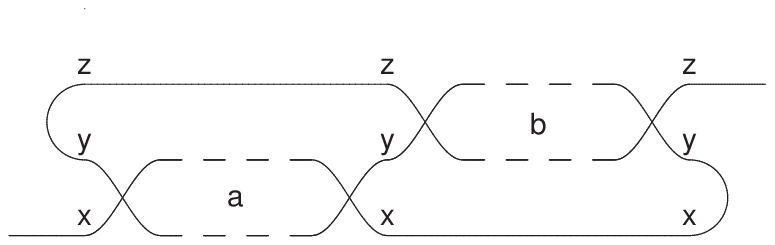}}}&
{\scalebox{.6}{\includegraphics{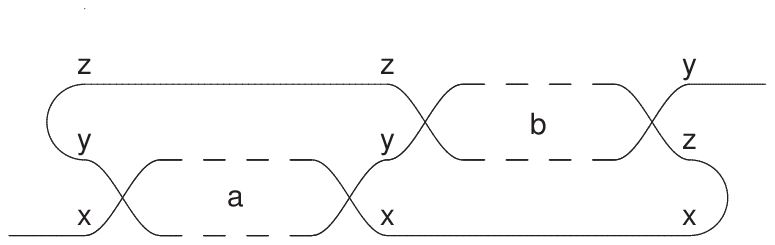}}}\\
$C(m)$&$C(m,n)$ with $n$ even&
$C(m,n)$ with $n$ odd
\end{tabular}
\end{center}
\caption{Diagrams for $C(m)$ or $C(m,n)$ knots}
\label{label link}
\end{figure}
By the Riemann--Hurwitz formula the map $\pi\circ \gamma$ has exactly two
ramification points, which correspond
precisely to the patterns $\subset$ and $\supset$ in the $\LL$-scheme
of $C$.
In particular, these two ramification points are real.
Hence,  still by the Riemann--Hurwitz formula, the surface $U_C$ is made of
three embedded disks $U_1,U_2,$ and $U_3$.
Each disk $U_i$ bounds the subknot $L_i$ of the link $L_C$.
The positivity of self-intersections of a complex algebraic
curve and Proposition \ref{inters} implies that
$\lk(L_i,L_j) = U_i \cdot U_j \geq 0 $.
On the other hand $U_i$ and $U_j$ intersect transversally at
complex conjugated nodes,  thus we have
$$\sum_{1\le i<j\le 3} U_i \cdot U_j= \beta,$$
and therefore
\begin{equation}\label{inter}
\lk(L_i,L_j)\le \beta, \quad 1 \leq i<j \leq 3.
\end{equation}
Moreover Lemma \ref{lem:nodes} states that
\begin{equation}\label{equ genus}
N+\alpha+2\beta= b-1.
\end{equation}
Let us label the components $L_1,L_2$, and $L_3$
as depicted in Figure \ref{label link}, and
let us evaluate the quantity  $2\lk (L_1,L_3)$
from the algorithm described in Section
\ref{sec:closed braid}.
Clearly, no crossing point of $\RR C$ contributes to $2\lk (L_1,L_3)$,
and the factor $(\sigma_1\sigma_2\sigma_1)^k$ contributes  $k$ to
$2\lk (L_1,L_3)$.
As in Section \ref{sec:closed braid}, we denote by $c_1\ldots c_{s+1}$ the
$\mathcal L$-scheme realized by $C$, and by $c'_0c_1\ldots
c_sc'_{s+1}$ the result of replacing $c_{s+1}$ by
$c'_0$ and $c'_{s+1}$.
We have $N=n+m$ and $s=N+\alpha+2$.
Let $p\ge 1$ such that $c_p=\subset_2$. Then we have
$c_{p+N+1}=\supset_2$ and
$c_q=\bullet_j$ for $1\le q \le p-1$ and $N+p+2\le q\le s$.
We can estimate the contribution of $b_\RR$ to $2\lk (L_1,L_3)$ as
follows:
\begin{enumerate}
\item[---] in the case of $C(m)$ or $C(m,n)$ with $N$ odd:

\noindent each pattern $c_q$
with $p+N+1\le q\le s$ contributes  $-1$;
each pattern
$c_qc_{q+1}$ with $1\le q\le p-1$
 contributes at least $-1$;
the pattern
$c'_0 c_1$ contributes at least $0$;
the pattern $c'_{s+1}$ contributes  $0$ if $b=3k-1$, and  $-1$ if $b=3k-2$.
\item[---] in the case of $C(m,n)$ with $N$ even:

\noindent
each pattern
$c_qc_{q+1}$ with $1\le q\le p-1$ or $p+N+1\le q\le s-1$
 contributes at least $-1$, as well as
the pattern
$c'_0 c_1$;
the pattern $c_sc'_{s+1}$ contributes at least $-1$ if $b=3k-1$, and at
least $0$ if $b=3k-2$.
\end{enumerate}

Altogether, writing $d=3k-1-\varepsilon$, we obtain
$$2\lk(L_1,L_3)\ge k-\alpha-1-\varepsilon $$
if $N$ is odd, and
$$2\lk(L_1,L_3)\ge k-\alpha-2+\varepsilon $$
if $N$ is even.
These last inequalities together with identities $(\ref{inter})$ and
$(\ref{equ genus})$ give
$$b-k\ge N-\varepsilon $$
if $N$ is odd, and
$$b-k\ge N-1+\varepsilon $$
if $N$ is even.
Hence we obtain
$$b\ge\pent{3N-1}2$$ which proves the proposition.
\end{proof}
\begin{rem}
The proof of Proposition \ref{prop:lower bound} shows that if
$b=\pent{3N-1}2$ then $\beta=0$  and $\alpha=\frac{N-3}{2}$ or
$\alpha=\frac{N}{2}-2$ depending on the parity of $N$. Moreover
the mutual position of the solitary nodes of
$C$  is imposed: the $\LL$-scheme realized by $C$ does not
contain the pattern  $(\bullet_j)^2$ (i.e. the pattern corresponding to two
successive solitary nodes must be $\bullet_j\bullet_{j\pm 1}$).
\end{rem}

\begin{rem}
Let us consider the harmonic knot $\H(3,7,11)$.
It is proved in \cite{KP1} that its crossing number is $6$.
Then its lexicographic degree is $(3,7,c),$ which is smaller than the
upper bound given
in Proposition \ref{prop:lower bound}.
This is the knot $6_3$ which is neither a torus knot nor a generalized
twist knot (\cite{KP4}).
In Section \ref{easy} we shall see that the lexicographic degree of
$6_3$ is $(3,7,11)$.
\begin{figure}[!ht]
\begin{center}
{\scalebox{.8}{\includegraphics{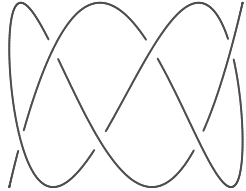}}}
\end{center}
\caption{The harmonic knot $ \H(3,7,11)= 6_3$}
\label{ctrex}
\end{figure}
\end{rem}

\subsection{Reduction to the case of alternating diagrams: real
  pseudoholomorphic curves}\label{sec:general}
As explained in the introduction
it might not be true that the lexicographic degree of a trigonal long knot diagram
does not increases along an isotopy through trigonal diagrams
which never increases the number of crossings. For this to be
true we have to consider \emph{real pseudoholomorphic curves} (see
Corollary \ref{cor:isotopy}).

We consider $\CC^2$ equipped with the standard Fubini--Study symplectic
form
$$\omega(u+iv,u'+iv' )=u\land v + u'\land v'. $$
Recall that an \emph{almost complex structure}
 $J$ on $\CC^2$ is  a smooth section
$J: \CC^2\to End_\RR(T\CC^2)$ such that $J_p^{2}=-Id_{T_p\CC^2}$ for any $p\in\CC^2$, and that $J$ is
said to be \emph{tamed} by $\omega$ if $\omega(v,Jv)>0$ for any
non-null vector $v\in  T\CC^2$. Such an almost complex structure
is called \emph{real} if the standard complex conjugation $conj$ on
$\CC^2$ is $J$-antiholomorphic  (i.e. $conj\circ J=J^{-1}\circ conj$).
For example, the standard complex structure $J_0$
on $\CC^2$ is real and tamed by $\omega$.
An almost complex structure
is called
 \emph{admissible} if it is real, tamed by $\omega$, and
coincide with
$J_0$ outside a compact set in $\CC^2$.

The next definition follows the lines of {\cite[Section 1]{OS1}} and
{\cite[Chapter 2]{McS}}.
We also denote by $J_0$ (resp. $conj$) the standard
complex structure
(resp. complex conjugation) on $\CC$.
\begin{defi}
  Let $J$ be an admissible  almost complex structure on $\CC^2$. A
\emph{real (rational)  $J$-holomorphic curve} is a map
$\gamma:\CC \to\CC^2$ which is polynomial outside a compact set of $\CC$, and such that
$$d\gamma'\circ J_0=J\circ d\gamma' \quad\mbox{and}\quad \gamma\circ
  conj=conj\circ\gamma. $$

A \emph{real pseudoholomorphic curve} $\gamma:\CC \to\CC^2$ is a map
which is $J$-holomorphic for some  admissible
 almost complex structure
$J$
on $\CC^2$.
\end{defi}

\begin{exa}
Every
polynomial real map $\gamma:\RR\to\RR^2$
is a real $J_0$-holomorphic curve. However
not every
 real pseudoholomorphic curve   is $J_0$-holomorphic.
\end{exa}

By definition, the image $C=\gamma(\CC)$ of
any real pseudoholomorphic curve $\gamma$
coincide with a real algebraic curve $C_0$ outside a compact set of $\CC^2$.
We say that  $\gamma$
is of
\emph{bidegree} $(3,b)$ if the curve $C_0$ has for Newton polygon the
triangle with vertices $(0,0),(0,3)$, and $(0,b)$.
As in the algebraic case,
 we may assume for our purposes  that if a real pseudoholomorphic curve is of
bidegree $(3,b)$, then $b=3k-1$ or $b=3k-2$.
Note that nodal pseudoholomorphic curves satisfy the adjunction
formula (see {\cite[Chapter 2]{McS}}), in particular Lemma
\ref{lem:nodes} still holds for pseudoholomorphic curves.

The braid theoretical approach described in Section \ref{rph}
extends word by word
to the case of nodal real
pseudoholomorphic curves of bidegree $(3,b)$ (see for example
\cite{O1,OS1}).
Indeed, in these latter two sections we only use the behavior at infinity of
real algebraic curves and the three following facts:
\begin{enumerate}
\item any intersection of two complex algebraic curves is positive;
\item there exists a unique vertical line passing through a given
  point $p$ of $\CC^2$, which is real if $p$ is real;
\item the map $\pi$ is a real holomorphic map.
\end{enumerate}

Let us fix an admissible almost complex structure  $J$ on $\CC^2$.
The word by word extension of Sections \ref{braid} and
\ref{sec:closed braid} to $J$-holomorphic curves
follows from the three following facts:
\begin{enumerate}
\item[(1')] any intersection of two $J$-holomorphic curves is positive (see
  {\cite[Appendix E]{McS}});
\item[(2')] there is an analogue of the pencil of vertical lines in
  $\CC^2$: consider the standard embedding
$$\begin{array}{ccc}
\CC^2 & \longrightarrow& \CC  P^2
\\ (x,y) &\longmapsto & [x:y:1]
\end{array};$$
 then given a point $p\in\CC^2$
there exists a unique $J$-holomorphic line in $\CC P^2$
passing through $p$ and $[0:1:0]$ (see \cite{Gro}); by uniqueness, this
line is real if $p$ is real; this produces a
real pencil of
``vertical'' $J$-holomorphic lines on $\CC^2$;

\item[(3')] since $J$ is standard outside a compact set of $\CC^2$,
the line $\{y=b\}$ is $J$-holomorphic for $|b|$ large enough; replace
the map $\pi$ by the map $\pi_b:\CC^2\to\{y=b\}$ which associates to
each point $p$ of $\CC^2$ the (unique) intersection point of $\{y=b\}$
with the $J$-holomorphic vertical line passing through $p$; the map
$\pi_b$ is clearly a real $J$-holomorphic map.
\end{enumerate}

The main advantage in dealing with real pseudoholomorphic curves rather
than real algebraic curves is that we have the following proposition.

\begin{prop}[Orevkov, \cite{O1}]\label{operations}
Let $\gamma:\CC\to\CC^2$
be a real nodal pseudoholomorphic curve of bidegree $(3,b)$
realizing an  $\mathcal L$-scheme $\LL_\gamma$, and let $\LL_\gamma'$ be the
$\mathcal L$-scheme  obtained
from $\LL_\gamma$ by one of the following elementary operations~:
$$\begin{array}{c c c c c }
\times_j\supset_{j\pm 1}\ \leftrightarrow\ \times_{j\pm 1}\supset_j &  &
\subset_{j\pm 1}\times_j\ \leftrightarrow\ \subset_j\times_{j\pm 1} & &
 \times_{j}\times_{j}\ \to\ \emptyset
\end{array}$$

$$\begin{array}{ccccc}
\times_j\supset_j\ \leftrightarrow\ \supset_j\bullet_j&  &
\subset_j\times_j\ \leftrightarrow\ \bullet_j\subset_j & &
 \times_{j+ 1}\times_{j}\times_{j+ 1}\ \leftrightarrow \ \times_{j}\times_{j+1}\times_{j}
\end{array}$$
Then there exists a real nodal pseudoholomorphic curve of bidegree $(3,b)$
realizing  $\LL_{\gamma}'$.
\end{prop}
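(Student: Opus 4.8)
The plan is to realize each elementary operation by a \emph{local} surgery on the curve, supported over a small disk in the base $\CC$, and then to glue the result back into $\gamma$. Each listed move alters $\LL_\gamma$ only inside a bounded substring of consecutive letters, and that substring records the behaviour of $C=\gamma(\CC)$ over a small arc $I$ of the real $x$-axis meeting the finitely many non-transversal vertical lines involved. Over a disk $\Omega\subset\CC$ containing $I$, the portion $C\cap\pi^{-1}(\Omega)$ is a real nodal trigonal tangle, and since the move leaves the $\mathcal L$-scheme untouched outside the modified substring, the configuration of branches of $C$ near $\pi^{-1}(\partial\Omega)$ is unchanged. It therefore suffices to produce, for each move, a real nodal pseudoholomorphic tangle over $\Omega$ that realizes the new substring and coincides with $\gamma$ near $\pi^{-1}(\partial\Omega)$; because, by fact (3'), the almost complex structure is standard outside a compact set and the vertical pencil is the usual one near $\partial\Omega$, such a tangle glues to the unchanged part of $\gamma$ to give a global real nodal pseudoholomorphic curve realizing $\LL_\gamma'$. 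Since the surgery is supported over a compact region, the behaviour at infinity — and hence the bidegree $(3,b)$ — is unchanged; accordingly, by Lemma \ref{lem:nodes}, the total number of nodes is preserved, as one checks directly for each move.

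The commutation moves $\times_j\supset_{j\pm1}\leftrightarrow\times_{j\pm1}\supset_j$ and $\subset_{j\pm1}\times_j\leftrightarrow\subset_j\times_{j\pm1}$, together with the braid relation $\times_{j+1}\times_j\times_{j+1}\leftrightarrow\times_j\times_{j+1}\times_j$, preserve the type of every node, and the required local tangle is obtained by a genuine real isotopy: the events occur at well-separated heights, so one may slide the corresponding vertical critical lines past one another, or perform the Reidemeister~III exchange, through a path of real nodal configurations. Such a path is realized by a one-parameter family of admissible almost complex structures $J_t$ carrying $J_t$-holomorphic curves that are standard near $\pi^{-1}(\partial\Omega)$ throughout; the local picture is that of a braid isotopy and stays nodal.

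The genuinely new content, and the reason one must leave the algebraic category, lies in the three moves that trade \emph{real} nodes for \emph{non-real} ones. In $\times_j\supset_j\leftrightarrow\supset_j\bullet_j$ and $\subset_j\times_j\leftrightarrow\bullet_j\subset_j$, a crossing (the transverse meeting of two real branches) is converted into a solitary node (the meeting of two complex conjugate branches): one builds a real local family in which the two preimages $t_1,t_2$ of the double point start real, collide at the real ramification point $t^\ast$ of $\pi\circ\gamma$ corresponding to the tangency, and re-emerge as a complex conjugate pair $t_2=\overline{t_1}$, the image double point staying real throughout since $\gamma$ is real; the two ends of the family are nodal with $\mathcal L$-schemes $\times_j\supset_j$ and $\supset_j\bullet_j$. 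In $\times_j\times_j\to\emptyset$, the two real crossings of a bigon are pushed off $\RR^2$: the two real branches are pulled apart in the plane while, as complex curves, they continue to meet at a pair of complex conjugate nodes, so that the total node count is preserved (here $N$ drops by $2$ and $\beta$ increases by $1$); this is why only this direction is asserted, the reverse being unnecessary for the argument. In each case the surgery is carried out with an admissible $J$: one first writes a smooth real model of the desired local topology and then chooses $J$ — real, tamed by $\omega$, and standard near $\pi^{-1}(\partial\Omega)$ — for which the model is holomorphic, the vertical pencil being furnished by Gromov's theorem (fact (2')) and the local intersection indices being forced by positivity of intersections (fact (1')).

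The main obstacle is precisely the construction of these last local models and their gluing: one must check that an admissible almost complex structure interpolating between the one prescribed near the modified nodes and $J_0$ near $\pi^{-1}(\partial\Omega)$ exists and stays tamed by $\omega$, and that the glued curve remains nodal of bidegree $(3,b)$. Once the local models for all six operations are available, the localization-and-gluing scheme of the first paragraph yields the desired curve realizing $\LL_\gamma'$.
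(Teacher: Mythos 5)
The paper does not actually prove this proposition: it is quoted from Orevkov \cite{O1} and used as a black box, so there is no internal argument to compare yours against line by line. Your sketch nevertheless reconstructs the strategy that Orevkov's proof rests on: localise over a disk $\Omega$ in the base, replace the trigonal tangle over $\Omega$ by a local model realizing the new substring with the same boundary behaviour, and glue, exploiting the flexibility of the pseudoholomorphic category (facts (1'), (2'), (3')) precisely where an algebraic construction would be obstructed. Your geometric reading of each move is also correct: the commutation and braid-relation moves are honest planar isotopies (passing once through a non-generic position where two events share a vertical line, which is harmless); $\times_j\supset_j\leftrightarrow\supset_j\bullet_j$ is the loop at the tip of a turning branch being pulled through the ramification point, the two real preimages of the node colliding and re-emerging as a complex conjugate pair; $\times_j\times_j\to\emptyset$ is an empty bigon pulled off the real plane, with $N\mapsto N-2$ and $\beta\mapsto\beta+1$, consistent with Lemma \ref{lem:nodes}.

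The gap — which you name yourself in your final paragraph — is that the entire technical content of the proposition is deferred rather than carried out. You assert, but do not construct, (i) the local real nodal symplectic surface realizing the new substring with prescribed behaviour near $\pi^{-1}(\partial\Omega)$; (ii) a real, $\omega$-tame almost complex structure agreeing with $J_0$ near $\pi^{-1}(\partial\Omega)$ and outside a compact set, for which this surface \emph{and} the fixed pencil of vertical lines are simultaneously holomorphic; and (iii) the verification that the glued object is again a rational nodal curve of bidegree $(3,b)$ — rationality does follow from the preserved node count via the adjunction formula, but that deduction should be made explicit. Point (ii) is the crux: making a given symplectic surface $J$-holomorphic for some tame $J$ is standard, but doing so while keeping $J$ real, compatible with the prescribed vertical pencil, and standard where required needs an argument (e.g. building $J$ fibrewise over the pencil). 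As a blind reconstruction your proposal identifies all the right ingredients and the correct node bookkeeping; as a proof it is a program rather than an execution, which is presumably why the authors, as you would ultimately have to, simply cite \cite{O1}.
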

\begin{cor}\label{cor:isotopy}
Let $D_1$ and $D_2$ be two trigonal long knot diagrams such that $D_2$ is
obtained from $D_1$
by an isotopy  of trigonal
diagrams which never increase the number of crossings.
If
there exists a real nodal pseudoholomorphic curve $\gamma_1:\CC\to\CC^2$
of bidegree $(3,b)$ such that $\gamma_1(\RR)$ is
$\LL$-isotopic to $\overline D_1$, then there also
exists a real nodal pseudoholomorphic curve $\gamma_2:\CC\to\CC^2$
of bidegree $(3,b)$ such that $\gamma_2(\RR)$ is
$\LL$-isotopic to $\overline D_2$.
\end{cor}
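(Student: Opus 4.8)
The plan is to prove the statement by induction along the isotopy, using Proposition \ref{operations} as the only realization tool. The key preliminary observation is that the bidegree is preserved automatically under each of Orevkov's operations. Indeed, since $\gamma_1$ is polynomial and $J$ is standard at infinity, the vertical projection $\pi\circ\gamma_1$ behaves at infinity like a degree $3$ polynomial, so by Riemann--Hurwitz its number of ramification points --- equivalently the number of folds $\subset,\supset$ in the $\mathcal L$-scheme --- is fixed throughout. Moreover every operation of Proposition \ref{operations} leaves the quantity $N+\alpha+2\beta$ unchanged, which by Lemma \ref{lem:nodes} equals $b-1$. Hence a curve obtained from $\gamma_1$ by any composition of these operations is again of bidegree $(3,b)$, and the induction, if it reaches $\overline D_2$, yields the desired $\gamma_2$.

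First I would put the ambient isotopy $\{D_t\}$ joining $D_1$ to $D_2$ in general position with respect to the vertical fibration of $\RR^2$. Then it becomes a concatenation of finitely many elementary moves, each being either an $\mathcal L$-isotopy, which leaves the $\mathcal L$-scheme unchanged and so requires nothing, or a single codimension-one degeneration of the projection: a crossing passing a fold, a triple point (Reidemeister III), the cancellation of two crossings (Reidemeister II), or the disappearance of a kink (Reidemeister I). Because the number of crossings never increases, no crossing-creating move (a Reidemeister I or II performed in the opposite direction) occurs.

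Next I would match each degeneration to an operation of Proposition \ref{operations}. A crossing passing a fold is realized by $\times_j\supset_{j\pm1}\leftrightarrow\times_{j\pm1}\supset_j$ or $\subset_{j\pm1}\times_j\leftrightarrow\subset_j\times_{j\pm1}$, and a triple point by $\times_{j+1}\times_j\times_{j+1}\leftrightarrow\times_j\times_{j+1}\times_j$. The subtle point is that the two Reidemeister moves lowering the crossing number do not make nodes of the complex curve disappear. In a Reidemeister I move the two real branches of the vanishing crossing become complex conjugate, so the crossing is replaced by a solitary node; this is exactly $\times_j\supset_j\to\supset_j\bullet_j$ or $\subset_j\times_j\to\bullet_j\subset_j$, with $N$ dropping by $1$ and $\alpha$ rising by $1$. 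Likewise a Reidemeister II move sends the two cancelling real nodes to a conjugate pair in $\CC^2\setminus\RR^2$, i.e. $\times_j\times_j\to\emptyset$, with $N$ dropping by $2$ and $\beta$ rising by $1$. In each case Proposition \ref{operations} supplies a real nodal pseudoholomorphic curve of bidegree $(3,b)$ realizing the new $\mathcal L$-scheme, and we iterate until $\overline D_2$ is reached.

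The hard part is the bookkeeping behind these steps. One must verify that a generic crossing-non-increasing trigonal isotopy genuinely decomposes into the moves above --- a Reidemeister-type theorem for the $\mathcal L$-schemes of trigonal curves --- and that the hypothesis that the number of crossings never increases forbids precisely the one direction Proposition \ref{operations} does not provide, namely the creation of a pair of crossings (the inverse of $\times_j\times_j\to\emptyset$), together with any crossing-creating Reidemeister I. In addition one must check that the events leaving both the knot and the crossing number untouched --- two folds, or two disjoint crossings, exchanging their $x$-coordinates, and any creation or annihilation of an inessential fold pair $\supset_j\subset_j$ --- either do not alter the $\mathcal L$-scheme or are compositions of the listed operations; here the fact that the total number of folds is fixed by Riemann--Hurwitz is what lets us assume no fold pair is ever born, so that we never leave the class of bidegree $(3,b)$ pseudoholomorphic curves.
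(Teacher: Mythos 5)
Your argument is essentially the one the paper intends: the corollary is stated without proof as an immediate consequence of Proposition \ref{operations}, and your decomposition of a generic crossing-non-increasing trigonal isotopy into the elementary $\mathcal L$-scheme operations (with Reidemeister I and II producing a solitary node, resp.\ a conjugate pair of nodes, so that $N+\alpha+2\beta=b-1$ is preserved) is exactly the implicit mechanism. The bookkeeping you flag at the end --- that a generic such isotopy really decomposes into these moves and that no fold pair is created --- is likewise left implicit by the paper (it is part of what is established in the companion paper on trigonal diagrams), so your proposal matches the paper's approach.
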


\begin{prop}\label{prop:lower}
Let $K$ be the two-bridge knot $C(m)$ with $m$ odd, or $C(m,n)$ with $mn$ positive and even, and let
$N$ be  its crossing number.
Let $D$ be any trigonal diagram of $K.$
If there exists a real nodal pseudoholomorphic curve $\gamma:\CC\to\CC^2$
of bidegree $(3,b)$ such that $\gamma(\RR)$ is
$\LL$-isotopic to $\overline D$,
then $b\ge \pent{3N-1}2$.
\end{prop}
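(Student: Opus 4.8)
The plan is to reduce the general statement to the alternating case already treated in Proposition \ref{prop:lower bound}, transporting the hypothesis along the diagram moves provided by Theorem A while staying inside the class of pseudoholomorphic curves.

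First I would observe that Proposition \ref{prop:lower bound} holds verbatim for real nodal pseudoholomorphic curves, and not only for real algebraic ones. Indeed, its proof invokes only: the Riemann--Hurwitz count of the ramification points of $\pi\circ\gamma$ (which still makes sense once $\pi$ is replaced by the real pseudoholomorphic projection $\pi_b$ of item (3')), the resulting decomposition of $U_C$ into three embedded disks, the positivity of intersection numbers (item (1')), Proposition \ref{inters}, Lemma \ref{lem:nodes} (valid for nodal pseudoholomorphic curves by the adjunction formula), and the braid computation of Section \ref{sec:closed braid}. Since every one of these ingredients has been shown to extend word by word to the pseudoholomorphic setting via the three structural facts (1')--(3'), the inequality $b\ge\pent{3N-1}2$ holds for \emph{any} real nodal pseudoholomorphic curve of bidegree $(3,b)$ whose real part is $\LL$-isotopic to the alternating diagram $C(m)$ or $C(m,n)$.

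Next, starting from an arbitrary trigonal diagram $D$ of $K$ and a real nodal pseudoholomorphic curve $\gamma$ of bidegree $(3,b)$ with $\gamma(\RR)$ $\LL$-isotopic to $\overline D$, I would apply Theorem A: it transforms $D$ into the alternating trigonal diagram $\overline{D'}$ of $K$, namely $C(m)$ or $C(m,n)$, through an isotopy of trigonal diagrams which never increases the number of crossings. This is exactly the hypothesis of Corollary \ref{cor:isotopy}, so applying it (step by step along the moves of Theorem A, which correspond to the elementary operations of Proposition \ref{operations}) produces a real nodal pseudoholomorphic curve $\gamma'$ of the \emph{same} bidegree $(3,b)$ with $\gamma'(\RR)$ $\LL$-isotopic to $\overline{D'}$. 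Feeding $\gamma'$ into the pseudoholomorphic version of Proposition \ref{prop:lower bound} established above yields $b\ge\pent{3N-1}2$, as required.

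The main obstacle, and the reason pseudoholomorphic curves cannot be dispensed with, lies precisely in this reduction step: the individual moves of Theorem A correspond to the operations of Proposition \ref{operations}, which in general are \emph{not} realizable by real algebraic curves of the prescribed bidegree, so one cannot hope to remain in the algebraic category throughout the isotopy. The content of Corollary \ref{cor:isotopy} is that these operations \emph{are} realizable pseudoholomorphically without raising the bidegree, which is exactly what allows the lower bound to survive the passage from the alternating diagram to an arbitrary trigonal diagram. Once this transport is in place, the only remaining point to verify—that the proof of Proposition \ref{prop:lower bound} uses nothing beyond the facts (1')--(3')—is routine.
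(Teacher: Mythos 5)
Your proof is correct and takes essentially the same route as the paper: reduce to the alternating case by combining Theorem A with Corollary \ref{cor:isotopy}, then apply Proposition \ref{prop:lower bound}, whose proof extends verbatim to nodal real pseudoholomorphic curves since it only uses Lemma \ref{lem:nodes}, positivity of intersections, and the braid computation. The paper's own proof is a condensed statement of exactly this argument.
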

\Pf
According to Corollary \ref{cor:isotopy} and Theorem
A,
it is enough to prove the proposition when $D$ is
alternating, which has been done in Proposition \ref{prop:lower bound}
(recall that real pseudoholomorphic curves satisfy Lemma
\ref{lem:nodes}, so the proof of Proposition \ref{prop:lower bound} extends
word by word to this case).
\EPf

\begin{thm}\label{thm:main}
Let $K$ be the two-bridge knot $C(m)$ with $m$ odd, or $C(m,n)$ with $mn$ positive and even, and let
$N$ be  its crossing number. Then the  lexicographic degree of $K$ is
$$ \Bigl(3, \pent{3N-1}2, \pent{3N}2+1 \Bigr).$$
\end{thm}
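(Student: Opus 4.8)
The plan is to sandwich the lexicographic degree between the upper bound already available from \cite{KP4} and the lower bound on the second coordinate provided by Proposition \ref{prop:lower}, and then to pin down the third coordinate by an elementary divisibility argument. Write $b_0 = \pent{3N-1}2$ and $c_0 = \pent{3N}2 + 1$; a direct computation gives $b_0 + c_0 = 3N$ and $b_0 \ge N+1$ throughout the range of $N$ occurring for these families, so these values are compatible with the construction of \cite{KP4}.

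For the upper bound I would simply invoke the representation of multidegree $(3, b_0, c_0)$ produced in \cite{KP4} (this is the upper bound deduced in the introduction), so that the lexicographic degree of $K$ is at most $(3, b_0, c_0)$. For the lower bound, fix a polynomial knot $(P, Q, R)$ realizing the lexicographic degree of $K$, with $\deg P = 3$ and multidegree $(3, b, c)$, $3 < b < c$; after a generic perturbation we may assume its $xy$-projection is nodal. That projection $(P,Q)$ is a trigonal diagram of $K$, hence a real $J_0$-holomorphic, in particular real pseudoholomorphic, curve of bidegree $(3,b)$ whose real part is $\LL$-isotopic to the diagram, so Proposition \ref{prop:lower} yields $b \ge b_0$. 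If $b > b_0$ then $(3,b,c) >_{\mathrm{lex}} (3, b_0, c_0)$ and we are done, so the remaining task is to show that $b = b_0$ forces $c \ge c_0$.

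This is where the divisibility argument enters. By minimality we must have $c \not\equiv 0 \Mod 3$: otherwise the vertical shear $(x,y,z) \mapsto (x, y, z - \alpha x^{c/3})$, that is $R \mapsto R - \alpha P^{c/3}$ with $\alpha$ chosen to kill the leading coefficient, is an ambient isotopy of $\RR^3$ that strictly lowers $\deg R$, and reordering the coordinates into increasing degree then produces a strictly smaller lexicographic multidegree, contradicting minimality. Now if $N$ is odd, writing $N = 2p+1$ we have $b_0 = 3p+1$, so $c > b_0$ already forces $c \ge 3p+2 = c_0$. If $N$ is even, writing $N = 2p$ we have $b_0 = 3p-1$, so $c > b_0$ only gives $c \ge 3p = \pent{3N}2$, one short of the target; but $c = 3p \equiv 0 \Mod 3$ is excluded by the previous step, whence $c \ge 3p+1 = c_0$. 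In both cases $(3,b,c) \ge_{\mathrm{lex}} (3, b_0, c_0)$, and combined with the upper bound this gives the asserted equality.

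The substantive content is entirely contained in Proposition \ref{prop:lower}, hence in Orevkov's braid calculus together with the reduction to alternating diagrams furnished by Theorem A and Corollary \ref{cor:isotopy}; granting it, Theorem \ref{thm:main} is largely bookkeeping. The one genuinely delicate point is the third coordinate for even $N$, where the inequality $c > b$ alone is insufficient and the gap of size one is closed only by the observation that a coordinate of degree divisible by $3$ can always be lowered by a vertical shear. I would therefore take particular care to verify that this shear preserves the knot type and that, after reordering the coordinates into increasing degree, it strictly decreases the lexicographic multidegree, so that its exclusion is legitimate for a degree-minimal representation.
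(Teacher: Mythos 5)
Your argument is correct and follows the paper's proof in all essentials: the upper bound comes from the degree-$(3,b,c)$ representation with $b+c=3N$ and $N<b<c$ constructed in \cite{KP4}, and the lower bound on the second coordinate from Proposition \ref{prop:lower} (itself resting on Theorem A, Corollary \ref{cor:isotopy} and Proposition \ref{prop:lower bound}), applied to the $xy$-projection viewed as a real $J_0$-holomorphic, hence pseudoholomorphic, curve. The one place you go beyond the paper's own write-up is the mod-$3$ shear argument for the third coordinate when $N$ is even: the paper only records that the curve of \cite{KP4} has $c=\pent{3N}2+1$ and that every representation has $b\ge\pent{3N-1}2$, leaving implicit why a minimal representation cannot have multidegree $\bigl(3,\pent{3N-1}2,\pent{3N}2\bigr)$; your observation that $\pent{3N}2=3(N/2)$ is divisible by $3$ for even $N$, so that $R\mapsto R-\alpha P^{N/2}$ (a polynomial automorphism of $\RR^3$ preserving the knot type) would strictly lower the lexicographic multidegree after reordering, is exactly the right way to close that one-unit gap, and the odd-$N$ case indeed needs nothing beyond $c>b$.
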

\Pf
Let $K$ be any two-bridge knot. It is proved in \cite{KP4} that there
exists a polynomial curve
$\gamma:\RR\to\RR^3$
of degree $(3,b,c)$ such that $N<b<c$, $b+c=3N$, and
$\gamma(\RR)$ is the knot $K$. In particular we have
$ b\leq \pent{3N-1}2$ and $ c\geq \pent{3N}2+1$.
If $K$ is $C(m)$  or $C(m,n)$, then
by Proposition \ref{prop:lower} we have $b\geq \pent{3N-1}2$.
This  implies that $ c\leq \pent{3N}2+1$, and the theorem is proved.
\EPf

\begin{rem} It is shown in \cite{KP3}  that the harmonic knot
$\H (3,3n+2,3n+1)$ is the
torus knot $ \T(2, 2n+1)=C(2n+1)$.
By Theorem \ref{thm:main}, this is an explicit polynomial parametrization
of minimal lexicographic degree $(3, 3n+1, 3n+2).$
In this example, the number of crossings of the diagram is
greater than the crossing number of the knot.
In \cite{KP2} it is proved that there exists a polynomial representation of
$\T(2,2n+1)$ which is of minimal degree,
and such that the number of crossings is minimal.
\end{rem}

\section{A lower bound }\label{easy}
 Theorem \ref{thm:least} provides
 a general lower bound for the
lexicographic degree of every non-trivial knot, furthermore this lower
bound is sharp.

\begin{prop}\label{prop:alternating}
Let $\gamma:\RR\to\RR^3$ be a polynomial map
of degree $ (a,b,c)$ whose image is a smooth knot.
Suppose that $a$ and $b$ are coprime, and that  the $xy$-diagram of $\gamma(t)$
is alternating  with $ (a-1)(b-1)/2$ crossings.
Then  $ c \ge ab-a-b.$
\end{prop}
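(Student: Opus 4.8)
The plan is to pin down the combinatorial type of the $xy$-diagram from the maximality of the crossing count, then record the alternating condition as a sign pattern on the $z$-coordinate, and finally turn that pattern into a lower bound on $\deg R=c$ by an elementary count of real roots. Throughout write $\gamma(t)=(P(t),Q(t),R(t))$, and note first that $(a-1)(b-1)/2=N$ and $ab-a-b=(a-1)(b-1)-1=2N-1$, so the claim is exactly $c\ge 2N-1$, matching the bound of Theorem \ref{thm:least}.

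First I would make the number $(a-1)(b-1)/2$ do its work via Bezout. The crossings of the $xy$-projection are the unordered pairs $\{s,s'\}$, $s\ne s'$, with $P(s)=P(s')$ and $Q(s)=Q(s')$, that is the common zeros of
$$D(s,s')=\frac{P(s)-P(s')}{s-s'},\qquad E(s,s')=\frac{Q(s)-Q(s')}{s-s'},$$
of degrees $a-1$ and $b-1$. Bezout bounds the number of ordered pairs by $(a-1)(b-1)$, hence the number of crossings by $(a-1)(b-1)/2$. The hypothesis that this bound is attained forces every Bezout intersection to be real, simple, affine, and to contribute a transverse node. Since $\gcd(a,b)=1$, this rigidity determines the diagram up to $\mathcal L$-isotopy: it is the Chebyshev (Lissajous) diagram, whose $2N$ branch-passages are linearly ordered along $t$ and whose crossings are organized on vertical lines.

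Next I would encode the alternating condition. Traversing the knot by increasing $t$, let $\tau_1<\cdots<\tau_{2N}$ be the parameters at which a crossing is passed, and let $\tau_k^\ast$ be the partner of $\tau_k$. The passage at $\tau_k$ is an overpass exactly when $R(\tau_k)>R(\tau_k^\ast)$, so the alternating property says the signs $\eps_k=\mathrm{sign}\,\bigl(R(\tau_k)-R(\tau_k^\ast)\bigr)$ strictly alternate, $\eps_{k+1}=-\eps_k$ for $1\le k\le 2N-1$. I would then convert these alternations into real roots of a single polynomial of degree at most $c-1$, built as a divided difference of $R$. Using the vertical-line organization from the first step, the partner comparisons can be arranged along a monotone family, so that the auxiliary polynomial must change sign at least $2N-2$ times; a polynomial of degree $\le c-1$ with $\ge 2N-2$ sign changes has $\ge 2N-2$ real roots, whence $c-1\ge 2N-2$ and $c\ge 2N-1=ab-a-b$. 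Equality is realized by the harmonic knots $\H(3,4,5)=3_1$ and $\H(3,5,7)=4_1$ of the introduction, so the bound is sharp.

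The main obstacle is the last step. The partner involution $\tau_k\mapsto\tau_k^\ast$ is not a polynomial (indeed not globally continuous) function of $t$, so one cannot directly feed ``$R(t)-R(t^\ast)$'' into a root count. The crux is to exploit the rigid Chebyshev structure forced by the maximal-crossing hypothesis to replace the pairwise height comparisons by the sign behaviour of one honest polynomial of degree $\le c-1$, in such a way that each alternation of $(\eps_k)$ is witnessed by a genuine sign change. Controlling that this construction produces exactly $2N-2$ sign changes, losing at most one unit of degree, is what yields the sharp constant $ab-a-b$ rather than a weaker bound.
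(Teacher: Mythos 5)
Your reduction to the inequality $c\ge 2N-1$ with $N=(a-1)(b-1)/2$ is correct, and your general strategy --- encode the alternating over/under pattern as a sign sequence and force a single real polynomial to change sign roughly $2N$ times --- is exactly the right one. But the argument stops precisely at the step you yourself flag as ``the main obstacle'': you never construct the ``one honest polynomial'' whose sign changes witness the alternations, and the sentence ``the partner comparisons can be arranged along a monotone family, so that the auxiliary polynomial must change sign at least $2N-2$ times'' asserts the conclusion rather than proving it. The preceding material is either standard (the Bezout count) or, in the case of the claim that maximality of the crossing number forces a rigid Chebyshev-type diagram, both unjustified and unnecessary: the paper's proof uses no structural information about the diagram beyond the crossing count and the alternating property.

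The missing idea is an interpolation trick. Let $V$ be the span of the polynomials $x(t)^{\alpha}y(t)^{\beta}$ with $\alpha a+\beta b\le 2N-2$; a Frobenius-type count (Lemma \ref{lemma:frobenius}) shows that exactly $N$ such values $\alpha a+\beta b$ occur, so $\dim V=N$, and the evaluation map $h\mapsto\bigl(h(t_1),\dots,h(t_N)\bigr)$ at the crossing parameters is injective on $V$ (an $h$ in the kernel would vanish at all $2N$ partner parameters while having degree $\le 2N-2$), hence an isomorphism onto $\RR^N$. Crucially, any $h\in V$ is a polynomial in $x(t)$ and $y(t)$, so it automatically takes \emph{equal} values at the two parameters of each crossing; this is what replaces the non-polynomial involution $\tau_k\mapsto\tau_k^{\ast}$ that blocks your approach. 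Choosing $h$ with $h(t_i)=(z(t_i)+z(s_i))/2$ and setting $\tilde z=z-h$, one gets $\tilde z(t_i)>0>\tilde z(s_i)$ at every crossing, and the alternating hypothesis gives $(-1)^i\tilde z(\tau_i)>0$ along the ordered list of all $2N$ passage parameters, so $\tilde z$ has at least $2N-1$ real roots and hence degree at least $2N-1$. Since $\deg h\le 2N-2$, this forces $\deg z\ge 2N-1=ab-a-b$. Without this construction, or a substitute for it, your argument does not close.
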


The proof of Proposition \ref{prop:alternating} uses
the following lemma of Frobenius type.
\begin{lemma}\label{lemma:frobenius}
Let $a$ and $b$ be coprime positive integers. The number of integers of the form
$ n= \alpha a + \beta b, n \le ab-a-b-1 ,  \ ( \alpha , \beta ) \in
\ZZ_{\ge 0}^2,$
   is equal
to $ (a-1)(b-1)/2.$
\end{lemma}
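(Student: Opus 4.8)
The plan is to set up a canonical residue normal form for integers modulo $b$ and then exploit the symmetry of the set of representable integers about $ab-a-b$. First I would record the normal form: since $\gcd(a,b)=1$, for every integer $n$ there is a unique $\alpha\in\{0,1,\dots,b-1\}$ with $\alpha a\equiv n\pmod b$, and setting $\beta=(n-\alpha a)/b\in\ZZ$ gives $n=\alpha a+\beta b$. The key elementary observation is that $n$ is of the required form, i.e. a non-negative combination $\alpha'a+\beta'b$ with $\alpha',\beta'\ge 0$, if and only if $\beta\ge 0$ in this normal form. Indeed, any such representation forces $\alpha'\equiv\alpha\pmod b$ with $\alpha'\ge 0$, hence $\alpha'=\alpha+kb$ for some $k\ge 0$, and then $\beta=\beta'+ak\ge 0$; conversely $\beta\ge 0$ already exhibits $n=\alpha a+\beta b$ as a non-negative combination.

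The heart of the argument is a reflection about the midpoint. For $0\le n\le ab-a-b$ set $n'=ab-a-b-n$. Writing $n=\alpha a+\beta b$ in normal form, I would compute $n'=(b-1-\alpha)a+(-1-\beta)b$, and since $0\le\alpha\le b-1$ yields $0\le b-1-\alpha\le b-1$, this is exactly the normal form of $n'$. By the criterion above, $n$ is representable iff $\beta\ge 0$, whereas $n'$ is representable iff $-1-\beta\ge 0$, that is iff $\beta<0$. Thus \emph{exactly one} of $n$ and $n'$ is a non-negative combination of $a$ and $b$; in particular the involution $n\mapsto n'$ on $\{0,1,\dots,ab-a-b\}$ has no fixed point.

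It then remains only to count. The involution partitions the $ab-a-b+1$ integers of $\{0,1,\dots,ab-a-b\}$ into pairs, each containing exactly one representable integer, so the number of representable integers in this range is $(ab-a-b+1)/2=(a-1)(b-1)/2$. Finally, applying the dichotomy to $n=0$ (which is representable) shows that $n'=ab-a-b$ is not representable, so discarding it leaves the count unchanged; hence the number of representable integers with $n\le ab-a-b-1$ is again $(a-1)(b-1)/2$, which is the claim.

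I expect the only genuinely delicate point to be the reflection step, specifically the verification that pushing the first coefficient $\alpha$ into the window $\{0,\dots,b-1\}$ sends the complementary second coefficient to $-1-\beta$, landing exactly one unit below zero precisely when $\beta\ge 0$. This is what produces the clean ``exactly one of the two is representable'' dichotomy and hence the fixed-point-free involution; everything else is routine bookkeeping with the normal form.
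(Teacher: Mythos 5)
Your proof is correct, and it takes a genuinely different (though closely related) route from the paper's. The paper works in the lattice of exponent pairs: it first shows separately that $ab-a-b$ is not representable, then compares the set $E=\{(\alpha,\beta)\in\ZZ_{\ge 0}^2 : \alpha a+\beta b\le ab-a-b-1\}$ with the box $Q=\{0\le\alpha\le b-2,\ 0\le\beta\le a-2\}$ of $(a-1)(b-1)$ points, using the point reflection $(\alpha,\beta)\mapsto(b-2-\alpha,a-2-\beta)$ to see that exactly half of $Q$ lies in $E$, and finally invokes injectivity of $(\alpha,\beta)\mapsto\alpha a+\beta b$ on $E$ (which requires $\gcd(a,b)=1$ and the bounds on $\alpha,\beta$) to convert the lattice count into a count of integers. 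You instead work directly with the integers via the classical normal-form membership criterion for the numerical semigroup $\langle a,b\rangle$, obtaining the fixed-point-free involution $n\mapsto ab-a-b-n$ on $\{0,\dots,ab-a-b\}$ under which exactly one element of each pair is representable. The two arguments exploit the same underlying symmetry about $ab-a-b$, but yours proves the stronger standard symmetric-gap property and gets the non-representability of $ab-a-b$ for free (from the dichotomy at $n=0$), whereas the paper establishes that fact first by a short congruence argument and then does a double count in the rectangle; the paper's version avoids setting up the normal form but has to justify the injectivity step that your approach sidesteps. All the delicate points you flagged (the window for $\alpha$, the complementary coefficient landing at $-1-\beta$, the absence of fixed points) check out.
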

\begin{proof}
Let  $ N= (a-1)(b-1)/2$. First, let us show that the integer
$2N-1=ab-a-b$ is not of the form
$ \alpha a + \beta b, $ with $ (\alpha, \beta) \in \ZZ_{\ge 0}^2.$

If we had $ ab-a-b= \alpha a + \beta b,$ then $ \alpha \equiv -1  \Mod b  ,$ and
$ \alpha \ge b-1.$ Hence we deduce that $ \alpha a + \beta b  \ge ab-a +\beta b >
2N-1$,  contrary to our hypothesis

Now consider the sets
$E = \{ (\alpha, \beta ) \in \ZZ_{\ge 0} ^2 , \alpha a + \beta b \le 2N-2 \}$
 and
$ Q= \{ (\alpha, \beta) \in \ZZ_{\ge 0}^2, \  \alpha \le b-2, \, \beta \le a-2\}.$
The  set $Q$ has $ 2N$ elements, and  contains $E$.
Denoting
$s( \alpha, \beta) =\alpha a + \beta b ,
\ \alpha' = b-2- \alpha, \,$ and  $\beta ' = a-2- \beta$,   we have
$ s(\alpha , \beta) + s ( \alpha ', \beta ') = 2(2N-1),$ and then
$  ( \alpha', \beta' ) \in Q - E$ if and only if $ (\alpha , \beta ) \in E.$

Consequently, half  the elements of $Q$ are in $E$, and then   $E$ has $N$ elements.
   Since $a$ and $b$ are coprime,
the mapping $s$ is an injection from $E$ to $\ZZ_{\ge 0}$,
 which concludes the proof.
\end{proof}
\begin{proof}[Proof of Proposition \ref{prop:alternating}]
The $N= (a-1)(b-1)/2$ crossing points of the $xy$-projection of
$\gamma(t)=(x(t),y(t),z(t))$
correspond to parameter pairs $(s_i,t_i)$ such that
$$
 x(t_i)=x(s_i),   \    \  y(t_i)=y(s_i), \  \   z(t_i) > z (s_i).
$$

Consider $V$ the vector subspace of $\RR [t]$ generated by the polynomials
$x(t)^{\alpha} y(t)^{\beta}$,
$\alpha a + \beta b  \le 2N-2$.
By  Lemma \ref{lemma:frobenius}, the dimension of $V$ is $N$.
Let us define the linear mapping
$ \varphi: \, V \rightarrow \RR^N$ by
$ \varphi ( h) = \Bigl( h(t_1), \ldots , h(t_N) \Bigr).$
If a polynomial $h$ is in the kernel of $ \varphi$, then
$ h(t_1)=h(s_1)= \cdots = h(t_N)= h(s_N)=0,$ and the polynomial $h$ has $2N$ distinct roots.
Since $\deg (h )\le 2N-2,$ we deduce that $h=0.$

Consequently the mapping $\varphi$ is an isomorphism and there exists
$h \in V$ such that $ h(t_i)=h(s_i)= (z(t_i)+ z(s_i))/2.$

Let us define $ \tilde z (t) = z(t) -h(t) .$
The images of $\gamma$ and $ \tilde \gamma (t) = \bigl(x(t), y(t), \tilde z(t)
\bigr) $
are isotopic
and realize  the same diagram.
By construction,  at each crossing we have $ \tilde z(t_i)> 0 > \tilde z(s_i).$

Let us write $\{t_1, \ldots , t_N, s_1, \ldots, s_N\} = \{\tau_1, \ldots, \tau_{2N}\}$ where
$\tau_j < \tau_{j+1}$.
Since the
diagram realized by $ \tilde\gamma(t)$
is alternating, we may assume that $(-1)^i \tilde z(\tau_i)>0$.
Consequently
 the polynomial
$ \tilde z (t) $ has at least one root in the interval $ ( \tau _j,
 \tau _{j+1} )$, and
$ \deg  (\tilde z) \ge 2N-1 .$
Since $\deg (h) \le 2N-2 ,$  we conclude that $ \deg (z) \ge 2N-1 .$
\end{proof}
As a consequence we deduce:
\begin{thm} \label{thm:least}
Let $K$ be a knot of crossing number $N \ne 0.$
Then the  lexicographic degree of $K$ is at least $(3, N+1, 2N-1).$
For every integer $ N\not\equiv 2 \, \Mod 3, N \ge 3,$ there exists a knot of crossing number $N$
and  lexicographic degree $ (3, N+1, 2N-1)$
\end{thm}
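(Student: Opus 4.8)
The plan is to establish the two halves of Theorem \ref{thm:least} separately: the general lower bound $(3,N+1,2N-1)$ for every knot of crossing number $N$, and then the sharpness statement exhibiting knots that attain it when $N\not\equiv 2\Mod 3$.

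For the lower bound, I would first argue that $a=3$ cannot be improved: since the $x$-projection of any polynomial knot must realize at least three strands to close up into a genuinely knotted (hence non-planar) curve, we have $a\ge 3$, and the first coordinate of the lexicographic degree is exactly $3$ for the knots we consider. The real work is in the bound $b\ge N+1$ and $c\ge 2N-1$. For $b\ge N+1$, the key observation is that a degree-$(3,b)$ polynomial map has an $xy$-projection whose number of crossings is controlled by $b$: by B\'ezout-type counting (precisely Lemma \ref{lem:nodes}, which bounds the number of nodes of the complexified curve by the number of interior lattice points $b-1$ of its Newton triangle), the crossing number $N$ of the diagram satisfies $N\le N+\alpha+2\beta = b-1$, so $b\ge N+1$. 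For $c\ge 2N-1$, I would adapt the interpolation argument used in the proof of Proposition \ref{prop:alternating}: the $N$ crossings correspond to parameter pairs $(s_i,t_i)$ at which $x$ and $y$ agree, and one uses the $z$-coordinate to separate the two sheets. The polynomial $z(t)$ must change sign between consecutive crossing parameters once an interpolating correction of controlled degree is subtracted, forcing $\deg(z)\ge 2N-1$. The subtlety is that the general case is \emph{not} assumed alternating, so I cannot directly invoke the sign-alternation $(-1)^i\tilde z(\tau_i)>0$ used there; instead I would argue that for any diagram the $z$-polynomial must separate at least $2N-1$ sign-ordered sheet-crossings along $\RR$, which still yields $2N-1$ interlaced roots.

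For the sharpness half, the natural candidates are the torus knots $C(2n+1)$ together with the harmonic-knot parametrizations mentioned in the remark after Theorem \ref{thm:main}. When $N=2n+1$ is the crossing number of $C(2n+1)$, Theorem \ref{thm:main} gives lexicographic degree $(3,\pe{(3N-1)/2},\pe{3N/2}+1)$; one checks that precisely when $N\not\equiv 2\Mod 3$ this coincides with $(3,N+1,2N-1)$, since then $\pe{(3N-1)/2}$ collapses to $N+1$ for suitable small $N$ and, more systematically, one exhibits an \emph{explicit} family realizing the bound. Concretely I would use the harmonic knots $\H(3,N+1,2N-1)$ (or an equivalent Chebyshev parametrization), whose multidegree is $(3,N+1,2N-1)$ by construction, and verify via the Frobenius-type count of Lemma \ref{lemma:frobenius} that the $xy$-diagram is alternating with $(3-1)(N+1-1)/2=N$ crossings, so its crossing number is exactly $N$. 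The congruence restriction $N\not\equiv 2\Mod 3$ is exactly the arithmetic condition guaranteeing that $3$ and $N+1$ are coprime (equivalently that the middle degree $N+1$ is admissible as a $b$-value not reducible modulo the change of coordinates $y\mapsto y-\alpha x^k$), which is what Proposition \ref{prop:alternating} requires.

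The main obstacle I anticipate is the lower bound $c\ge 2N-1$ in the \emph{non-alternating} generality required by the theorem: the clean sign-alternation argument of Proposition \ref{prop:alternating} exploits the alternating hypothesis to guarantee a root of $\tilde z$ in each of the $2N-1$ gaps between consecutive sheet-parameters, and without it one must instead produce the required interlacing from the topology of the knot itself rather than from the diagram's combinatorics. I would handle this by reducing to a lower bound on the span of $z$ forced by the writhe or the number of essential sheet-transitions along any trigonal presentation, using that the knot is non-trivial (so $N>0$ forces genuinely nested sheets). The sharpness direction, by contrast, should be routine once the explicit family is in hand, the only real check being the coprimality/congruence bookkeeping.
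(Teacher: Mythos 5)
Your outline of the first two coordinates is essentially the paper's: $a\ge 3$ for a nontrivial knot, and $b\ge N+1$ from the B\'ezout bound $N\le (a-1)(b-1)/2=b-1$ on the crossings of a bidegree-$(3,b)$ projection. Your sharpness half also lands on the right family, the harmonic knots $\H(3,N+1,2N-1)$ (the paper simply cites \cite{KP1} for the fact that their crossing number is $N$); note however that your detour through Theorem \ref{thm:main} is a red herring, since $\pent{3N-1}{2}=N+1$ only for $N=3$, so the torus-knot degrees do not in general coincide with $(3,N+1,2N-1)$.

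The genuine gap is in the step $c\ge 2N-1$, exactly where you flag "the main obstacle." You try to prove this bound for an arbitrary, possibly non-alternating, trigonal diagram by asserting that the $z$-polynomial must still separate $2N-1$ "sign-ordered sheet-crossings"; this is not substantiated and is false as stated: the interlacing of roots of $\tilde z$ in Proposition \ref{prop:alternating} comes precisely from the alternation of over/under crossings along the parameter line, and a non-alternating diagram gives no such sign pattern. The resolution is much simpler and you missed it: because the bound is a \emph{lexicographic} one, the inequality $c\ge 2N-1$ is only needed in the single case $a=3$, $b=N+1$. In that case B\'ezout forces the $xy$-diagram to have exactly $N$ crossings, i.e.\ to be a minimal diagram; since a two-bridge knot is prime and alternating, every minimal diagram of it is alternating (Tait/Kauffman--Murasugi--Thistlethwaite), and moreover $b=N+1$ is coprime to $3$ (as $b$ cannot be divisible by $3$ for a degree in lexicographic-minimal position). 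Hence Proposition \ref{prop:alternating} applies verbatim with $ab-a-b=2N-1$, and no non-alternating generalization is ever required. Without this observation your argument for the third coordinate does not close.
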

\begin{proof}
Let $(a,b,c)$ be the  lexicographic degree of $K$.
Since $K$ is nontrivial, we have $ a \ge 3$. Suppose that $ a=3$. Then $K$ is a
two-bridge knot, and $b$ is not divisible by 3. Moreover
 by B\'ezout's theorem we have
$N \le (a-1)(b-1)/2= b-1$, that is $b \ge N+1.$

Let $\gamma(t):\RR\to\RR^3 $ be a polynomial parametrization of $K$ of
degree $(3,b,c)$.
If $b= N+1$, then the $xy$-diagram of $\gamma(t)$ has the minimal number of crossings.
Since $K$ is a two-bridge knot, it is an alternating knot, and the
$xy$-diagram of $\gamma(t)$ is alternating.
By Proposition \ref{prop:alternating}, we conclude that  $c\ge 2N-1.$
\pn
On the other hand, when $ N \not\equiv 2 \, \Mod 3 ,$
the harmonic knot $\H(3,N+1, 2N-1)$ has
crossing number  $N$ (see \cite{KP1}), hence
  its  lexicographic degree is $ (3, N+1, 2N-1)$.
\end{proof}
We also obtain the  lexicographic degree of an infinite family of three-bridge knots.
\begin{cor}
Let $\H$ be the harmonic knot $ \H= \H(5,b,4b-5)$, with  $b \not\equiv 0 \,  \Mod 5  .$
The knot $\H$ is a three-bridge knot of  lexicographic degree $ (5,b,4b-5).$
\end{cor}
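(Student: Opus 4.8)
The plan is to squeeze the lexicographic degree $(a_0,b_0,c_0)$ of $\H=\H(5,b,4b-5)$ between the upper bound coming from the explicit Chebyshev parametrization and the lower bounds coming from the crossing number and from Proposition \ref{prop:alternating}. Note first that $5,b,4b-5$ are pairwise coprime precisely when $b\not\equiv 0\Mod 5$, so $t\mapsto(T_5(t),T_b(t),T_{4b-5}(t))$ is an honest polynomial knot of multidegree $(5,b,4b-5)$. This already gives $(a_0,b_0,c_0)\le(5,b,4b-5)$, and since its first coordinate has degree $5$, the bridge number of $\H$ is at most $3$.

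To fix $a_0$, I would first record that the $xy$-diagram of this parametrization is the Chebyshev diagram $(T_5(t),T_b(t))$, which is a reduced alternating diagram with exactly $(5-1)(b-1)/2=2(b-1)$ crossings; hence $\H$ is alternating with crossing number $N=2(b-1)$. The key point is then that $\H$ is genuinely three-bridge, i.e. that its bridge number is at least $3$. Granting this, the bridge number equals $3$, and since the first coordinate of a polynomial representation of an $n$-bridge knot has degree at least $2n-1$ (for $n=2$ this is the two-bridge characterization recalled in the introduction), we obtain $a_0\ge 5$, hence $a_0=5$.

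Now I would fix $b_0$ and $c_0$ using a parametrization realizing $(5,b_0,c_0)$. By B\'ezout's theorem its $xy$-diagram has at most $(5-1)(b_0-1)/2=2(b_0-1)$ crossings, while every diagram of $\H$ has at least $N=2(b-1)$ crossings; thus $b_0\ge b$, and combined with $(5,b_0,c_0)\le(5,b,4b-5)$ this forces $b_0=b$. Consequently the $xy$-diagram has exactly $2(b-1)=(5-1)(b-1)/2$ crossings, so it is a minimal, hence alternating, diagram of $\H$. Since $5$ and $b$ are coprime, Proposition \ref{prop:alternating} applies and gives $c_0\ge 5b-5-b=4b-5$; together with the upper bound this yields $c_0=4b-5$. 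Therefore the lexicographic degree is $(5,b,4b-5)$.

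The hard part is the bridge-number input that $\H$ is three-bridge rather than two-bridge: the crossing-number comparison cannot rule out a two-bridge representation by itself, since a two-bridge knot of crossing number $2(b-1)$ would admit a degree $(3,b_0,c_0)$ parametrization with $b_0\ge 2b-1$, which is lexicographically \emph{smaller} than $(5,b,4b-5)$. This lower bound on the bridge number, together with the facts that $(T_5(t),T_b(t))$ is a reduced alternating diagram realizing the crossing number, are the properties of harmonic (Chebyshev) knots that I would import from the earlier study of this family; the remaining steps are, exactly as in the proof of Theorem \ref{thm:least}, a routine combination of B\'ezout's theorem and Proposition \ref{prop:alternating}.
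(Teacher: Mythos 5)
Your proof follows the paper's argument essentially step for step: the upper bound from the Chebyshev parametrization, $a_0\ge 5$ from three-bridgedness, $b_0\ge b$ from B\'ezout, and $c_0\ge 4b-5$ from Proposition \ref{prop:alternating} applied to the resulting minimal (hence alternating) $xy$-diagram. The one input you defer---that $\H$ is genuinely three-bridge---is exactly the point the paper settles in two lines: since the $xy$-projection is a reduced alternating three-bridged diagram, Tait's flyping conjecture excludes the existence of a minimal alternating two-bridge diagram.
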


\begin{proof} The  $xy$-projection of the knot $\H$ has $N=2 (b-1)$
  crossings and is alternating (see \cite{KP3}).
If $\H$ was two-bridged, then it would have a minimal diagram which would be alternating and two-bridged. By Tait's flyping conjecture, this is not the case since the
$xy$-projection of $\H$ is alternating and three-bridged.
\pn
Let $ \bigl( x(t),y(t), z(t) \bigr) $ be a polynomial knot isotopic to $\H.$
Since $\H$ is not two-bridged, we have $ \deg (x) \ge 5. $
If $\deg (x) =5,$ then  by B\'ezout's theorem we have $ N \le 2 \bigl( \deg (y) -1  \bigr) ,$ that is
$  \deg (y) \ge b.$
If $ \deg (y)= b,$ then by Proposition \ref{prop:alternating} we obtain $\deg (z) \ge 5b-5-b= 4b-5.$
\end{proof}

\begin{remark}
The  lexicographic degree of the harmonic knot $\H(a,b,ab-a-b)$ is not always
 $ (a, b, ab-a-b).$
For example, if $a=4$  this knot is two-bridged and has a polynomial parametrization of degree
$(3,b',c'), $ which is lexicographically smaller than $( 4, b, 3b-4).$
\end{remark}

Proposition \ref{prop:alternating} can also be used to study the maximal degree of a polynomial knot, that is
the maximum degree of its components.

\begin{cor}
Let $K$ be a polynomial knot of degree $d$ and crossing number $N.$
We have
$$
N \le \Frac{(d-2)(d-3)}{2}.
$$
Moreover, if $d>5$ and if $K$ is alternating, then we have:
$$
N \le \Frac{(d-1)(d-4)}{2} .
$$
\end{cor}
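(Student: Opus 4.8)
The plan is to bound the crossing number from above by the number of double points of a carefully chosen planar projection, and then to squeeze out the improved alternating bound by feeding that projection into Proposition~\ref{prop:alternating}. Throughout I use that the crossing number $N$ of $K$ is at most the number of crossings of \emph{any} diagram of $K$.

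First I would normalize the parametrization. Write $\gamma=(x,y,z)$ with $\max(\deg x,\deg y,\deg z)=d$ and set $W=\mathrm{span}_\RR(x,y,z)\subset\RR[t]$; since $K$ is knotted, $\dim W=3$. The degrees of a basis adapted to the filtration $F_k=\{w\in W:\deg w\le k\}$ are three distinct integers $d_1>d_2>d_3$, with $d_1=d$, whence $d_2\le d-1$ and $d_3\le d-2$. Passing to such a basis $(w_3,w_2,w_1)$ is a linear change of coordinates in the target $\RR^3$: it alters neither the knot type nor the maximal degree, so I may assume $\deg w_i=d_i$ with $d_3\le d-2$, $d_2\le d-1$, $d_1=d$. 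A crossing of the projection onto the two lowest-degree coordinates $(w_3,w_2)$ is an unordered pair $\{s,t\}$, $s\ne t$, with $w_3(s)=w_3(t)$ and $w_2(s)=w_2(t)$, i.e. a common zero of $\frac{w_3(t)-w_3(s)}{t-s}$ and $\frac{w_2(t)-w_2(s)}{t-s}$, which have degrees $d_3-1$ and $d_2-1$. These have no common factor (the double points of an embedded curve's projection are isolated), so by B\'ezout they meet in at most $(d_3-1)(d_2-1)$ points; crossings come in pairs $\{(s,t),(t,s)\}$, giving at most $\frac{(d_3-1)(d_2-1)}{2}\le\frac{(d-3)(d-2)}{2}$ double points. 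After a small perturbation of $\gamma$ preserving all three degrees this projection is an honest diagram of $K$, so $N\le\frac{(d-2)(d-3)}{2}$, which is the first inequality.

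For the second inequality, assume $d>5$ and $K$ alternating; by the first bound it suffices to rule out $N=\frac{(d-2)(d-3)}{2}$. If this held, then the chain of inequalities above would be forced to be equalities: $\deg w_3=d-2$, $\deg w_2=d-1$, and the $(w_3,w_2)$-projection $D$ realizes exactly $N=\frac{(d-3)(d-2)}{2}$ crossings. Then $D$ is a crossing-minimal, hence reduced, diagram of the alternating knot $K$, so $D$ is alternating. Since $d-2$ and $d-1$ are coprime, Proposition~\ref{prop:alternating} applies to the parametrization $(w_3,w_2,w_1)$ of degree $(d-2,d-1,d)$ and gives $d\ge (d-2)(d-1)-(d-2)-(d-1)=d^2-5d+5$, that is $(d-1)(d-5)\le 0$, which is impossible for $d>5$. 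Hence $N\le\frac{(d-2)(d-3)}{2}-1=\frac{(d-1)(d-4)}{2}$.

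The step I expect to be the main obstacle is the passage from ``crossing-minimal projection'' to ``alternating diagram'': to invoke Proposition~\ref{prop:alternating} one must know that a crossing-minimal diagram of an alternating knot is itself alternating. I would justify this through the (now proven) Tait conjectures, exactly as the three-bridge corollary above uses the flyping conjecture. A secondary point needing a word of care is genericity: one must check that the specific projection $(w_3,w_2)$ can be made an immersed diagram with only transverse nodes by a perturbation that keeps all three degrees equal to $d_3,d_2,d_1$, so that the degree $\deg w_1=d$ used in the final contradiction is preserved.
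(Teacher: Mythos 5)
Your proposal is correct and follows essentially the same route as the paper: reduce the lexicographic degree to at most $(d-2,d-1,d)$ by a linear change of coordinates, bound the crossings of the resulting projection by B\'ezout, and in the extremal alternating case invoke Proposition~\ref{prop:alternating} to derive $d\ge d^2-5d+5$, impossible for $d>5$. You even make explicit two points the paper leaves implicit (that a crossing-minimal diagram of an alternating knot is alternating, via the Tait conjectures, and the genericity of the projection), which is a welcome addition but not a different method.
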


\begin{proof} Using an affine transformation, we can suppose that the lexicographic degree of $K$
is $\le (d-2,d-1,d)$. By B\'ezout's theorem we see that the number of crossings of the $xy$-projection
is at most $ (d-2)(d-3)/2,$ which proves the first assertion.
If we had  $ N= (d-2)(d-3)/2,$ then the lexicographic degree of $K$
would be $(d-2,d-1,d)$.
By Proposition \ref{prop:alternating},
we have $d \geq (d-2)(d-1) - (d-2)-(d-1)= d^2-5d+5$, which is impossible, since $d>5$.
Consequently, we have $ N \le (d-2)(d-3)/2 -1 = (d-1)(d-4)/2.$
\end{proof}

\begin{remark} Since knots of crossing number smaller than 8 are alternating,
we deduce that the only knot of degree 4 is the trivial knot, the only nontrivial knot of degree 5
is the trefoil, and the crossing number of a knot of degree 6 is at most 5.
\end{remark}

\small

\bibliographystyle{alpha}

\begin{thebibliography}{}

\end{thebibliography}


\begin{thebibliography}{ZZ99}
\bibitem[BKP14]{BKP1}
E.~Brugall\'e, P.~-V.~Koseleff, D.~Pecker.
\newblock Untangling trigonal diagrams,
2014

\bibitem [Cr04]{Cr}
P. R. Cromwell, {\it Knots and links},
Cambridge University Press, Cambridge, 2004. xviii+328 pp.

\bibitem[Gro85]{Gro}
M.~Gromov.
\newblock Pseudoholomorphic curves in symplectic manifolds.
\newblock {\em Invent. Math.}, 82(2):307--347, 1985.
%
\bibitem[KP08]{KP1}
P. -V. Koseleff, D. Pecker, {\it On polynomial Torus Knots},
Journal of Knot Theory and its Ramifications, Vol. {\bf 17 (12)}
(2008), 1525-1537
%
\bibitem[KP09]{KP2}
P. -V. Koseleff, D. Pecker, {\it
A polynomial parametrization of torus knots}.
Appl. Algebra Engrg. Comm. Comput. Vol {\bf 20 (5-6)} (2009), 361-377
%
\bibitem[KP11]{KP3}
P. -V. Koseleff, D. Pecker, {\it Chebyshev knots},
Journal of Knot Theory and Its Ramifications,
Vol 20, {\bf 4} (2011) 1-19
%
\bibitem[KP10] {KP4}
P. -V. Koseleff, D. Pecker, {\it Chebyshev diagrams for two-bridge knots},
Geom. Dedicata 150 (2010), 405--425
%
\bibitem[KP12]{KP6}
P. -V. Koseleff, D. Pecker, {\it Harmonic knots}, 18p.,
{\tt arXiv1203.4376}
%
\bibitem[MS04]{McS}
D. McDuff, and D. Salamon.
\newblock  {$J$}-holomorphic curves and symplectic topology
\newblock {\it Am. Math. Soc. Coll. Publi.} volume {\bf  52}, 2004
%
\bibitem[Mi00]{Mi}
R. Mishra, {\it Minimal degree sequence for torus knots},
Journal of Knot Theory and its Ramifications, Vol. {\bf 9} (2000), no 6, 759-769.
%
\bibitem[Ore99]{O1}
S.~Yu. Orevkov.
\newblock Link theory and oval arrangements of real algebraic curves.
\newblock {\em Topology}, 38(4):779--810, 1999.

\bibitem[OS02]{OS1}
S.~Yu. Orevkov and E.~I. Shustin.
\newblock Flexible, algebraically unrealizable curves: rehabilitation of
{H}ilbert-{R}ohn-{G}udkov approach.
\newblock {\em J. Reine Angew. Math.}, 551:145--172, 2002.

\bibitem[RS96]{RS}
A. Ranjan and R. Shukla, {\it On polynomial representation of
torus knots,} Journal of knot theory and its ramifications, Vol. 5
(2) (1996) 279-294.

\bibitem[Sh92]{Sh}
A.R. Shastri, {\it Polynomial representation of knots},
 T{\^o}hoku Math. J. {\bf 44 } (1992), 11-17.

\bibitem[Va90]{Va}
V. A. Vassiliev, {\it Cohomology of knot spaces}, Theory of
singularities and its Applications, Advances Soviet Maths Vol 1
(1990)

\end{thebibliography}

\goodbreak
\vfill
\pn
\hrule width 7cm height 1pt 
\pn
{\small
Erwan {\sc Brugallé}\\
École Polytechnique,
Centre Mathématiques Laurent Schwartz, 91 128 Palaiseau Cedex, France\\
e-mail: {\tt erwan.brugalle@math.cnrs.fr}
\pn
Pierre-Vincent {\sc Koseleff}\\
Universit\'e Pierre et Marie Curie (UPMC Sorbonne Universit\'es),\\
Institut de Math\'ematiques de Jussieu (IMJ-PRG)  \& Inria-Rocquencourt\\
e-mail: {\tt koseleff@math.jussieu.fr}
\pn
Daniel {\sc Pecker}\\
Universit\'e Pierre et Marie Curie (UPMC Sorbonne Universit\'es),\\
Institut de Math\'ematiques de Jussieu (IMJ-PRG),\\
e-mail: {\tt pecker@math.jussieu.fr}
}

\end{document}